\newtheorem{theorem}{Theorem}[section]
\newtheorem{question}[theorem]{Question}
\newtheorem{lemma}[theorem]{Lemma}
\newtheorem{corollary}[theorem]{Corollary}
\newtheorem{proposition}[theorem]{Proposition}
\theoremstyle{definition}
\newtheorem{definition}[theorem]{Definition}
\newtheorem{example}[theorem]{Example}
\theoremstyle{remark}
\newcommand{\rank}{\textup{rank}\,}
\DeclareMathOperator{\Tr}{Tr}
\newcommand{\RR}{\mathbb R}
\title{The phase rank of a matrix}
\author{Ant\'onio Pedro Goucha \and Jo\~{a}o Gouveia  \thanks {
The first author's research was supported through a PhD scholarship from FCT, grant PD/BD/135276/2017.  The second author was supported by the Centre for
Mathematics of the University of Coimbra, grant UIDB/00324/2020, funded by the Portuguese
Government through FCT/MCTES and form the grant P2020 SAICTPAC/0011/2015.}}
\date{CMUC, Department of Mathematics, University of Coimbra}
\begin{document}

\maketitle

\begin{abstract}
In this paper, we introduce and study the notion of phase rank. Given a matrix filled with phases, i.e., with complex entries of modulus $1$, its phase rank is the smallest possible rank for a complex matrix with the same phase, but possible different modulus. This notion generalizes the notion of sign rank, and is the complementary notion to that of phaseless rank. It also has intimate connections to the problem of ray nonsingularity and provides an important class of examples for the study of coamoebas.
Among other results, we provide a new characterization of phase rank $2$ for $3 \times 3$ matrices, that implies the new result that for the $3\times 3$ determinantal variety, its coamoeba is determined solely by the colopsidedness criterion.
\end{abstract}

\section{Introduction}\label{sec:mot}

In this paper, we study the problem of minimizing the rank of a complex matrix given the phase of its entries, but not their modulus. This is a natural offshoot of the previous work  \cite{GG21}, where we studied the  complementary problem of minimizing the rank of a complex matrix when we are given the modulus of the entries but not their phases. While similar in formulation and with obvious links, these problems are of very different natures and need distinct approaches.

To further specify our scope, we will be interested in \emph{phase matrices}, that is to say, matrices whose entries are complex numbers of modulus one. We now want to minimize the rank among all complex matrices who have this entrywise phases. More precisely, we are interested in the following quantity.

\begin{definition}[\textbf{Phase rank}]
Let $S^1=\{z \in \mathbb{C}: |z|=1\}$ and $\Theta$ be a phase matrix, i.e., with entries in $S^1$. We define its phase rank as
$$\rank_{\text{phase}}(\Theta) = \min \{\rank(M): M \in (\mathbb{C}^{*})^{n\times m}, \frac{M_{ij}}{|M_{ij}|}= \Theta_{ij}, \forall i,j\}.$$
\end{definition}

Here, $\mathbb{C}^{*}$ is the multiplicative group of all complex numbers except zero.
An alternative way of defining this quantity is
$$\rank_{\text{phase}}(\Theta) = \min \{\rank(M*\Theta): M\text{ is a matrix with positive entries }\},$$
where  $*$ denotes the Hadamard product of matrices.

\begin{example}\label{ex:phase_rank}
Let $$\Theta=\begin{bmatrix}
   1 & 1 & 1\\
   1 & i & -i\\
   1 & 1 & i
\end{bmatrix}
\text{ and }M=\begin{bmatrix}
   1 & 1 & 1\\
   1 & \sqrt{3}i & -\sqrt{3}i\\
   1 & 3 & \sqrt{3}i
\end{bmatrix}.$$
Since $\rank(M)=2, \rank_{\text{phase}}(\Theta)\leq 2.$
\end{example}

By definition, the matrices that we are considering in our minimization problem have nonzero entries, since zero does not have a well-defined phase. On could consider \emph{generalized phase matrices}, whose entries are allowed to be either complex numbers of modulus one or zero. We can generalize the notion of phase rank to this larger class by saying that for a generalized phase matrix $\bar{\Theta}$ its phase rank is the smallest rank of a matrix with the same support as $\bar{\Theta}$ and whose nonzero entries have the same phase as the corresponding entries of $\bar{\Theta}$.
In this paper, however, we will restrict ourselves to phase ranks of proper phase matrices.

There is a natural version of phase rank on the reals, the sign rank. The history on that topic goes back to work in the 1980s in communication complexity \cite{alon1985geometrical, paturi1986probabilistic}, but has antecedents in older pioneer work on the problem of \emph{signsolvability}: when is every matrix with a given generalized sign pattern guaranteed to be invertible, a problem that goes back to the 1960s (see for  instance \cite{klee1984signsolvability,maybe1969qualitative}). This whole area of research has seen a renewed interest in the last few years in both the mathematical and the computer science communities and several important developments have appeared \cite{MR2170271,razborov2010sign,bhangale2015complexity,alon2016sign}. 

There are also antecedents to our study of the complex case. The signsolvability problem has been extended to the complex case in what is called the ray nonsingularity problem: when is every matrix with the same phases of a given square generalized phase matrix (in that literature named a \emph{ray pattern}) invertible? This question has been exploited and essentially solved for (nongeneralized) phase matrices in a series of papers at the turn of the millenium \cite{mcdonald1997ray,lee2000extremal,li2004non} and has seen a few subsequent developments. The notion of phase rank, introduced above, can be seen as a natural extension of this work. 

Lastly, in terms of historical precedents, the set of possible phases of points in an algebraic variety is what is called a \emph{coamoeba}. Coamoebas were introduced by Mikael Passare in talks in the early 2000s as a dual notion to that of amoebas. There has been a growing body of research on these objects, a small selection of influential works are \cite{nilsson2010discriminant,forsgaard2015order,MR3204268}. Checking if the phase rank of a given phase matrix is less or equal than a fixed $k$ is the same as checking if it is in the coamoeba of the variety of complex matrices of rank at most $k$, so it is a particular instance of the coamoeba membership problem. Again, this mimics the relation between the phaseless rank, introduced in \cite{GG21}, and the amoeba membership problem.

{\bf Organization:} In section \ref{sec:assocon} we introduce the definitions and results on sign rank and coamoebas that will be of use later in the paper. Following that, in Section \ref{sec:nonmaximal_phase} we survey the results on ray-nonsingularity through the lens of phase rank. In Section \ref{sec:smallsquare} we present the main result of the paper, a novel characterization of $3 \times 3$ matrices with phase rank $2$, that implies in particular that the coamoeba of the $3\times 3$ determinantal variety is completely determined by colopsidedness. Additionally we briefly discuss the open questions in the $4\times 4$ case. Finally in Section \ref{sec:bounds_phase} we observe that the lower bounds from the sign-rank can be adapted to our case and present a simple new upper bound. We end with a brief conclusion.

\section{Associated concepts}\label{sec:assocon}

In this section we set up the background of the related concepts of sign rank and coamoebas.

\subsection{Sign rank}\label{subsec:sign}

Sign matrices, with $\pm1$ entries, arise naturally in many areas of research. For instance, they are used to represent set systems and graphs in combinatorics, hypothesis classes in learning theory, and boolean functions in communication complexity. The minimum rank of a matrix with a given sign pattern has several important interpretations in these fields and has attracted a high level of interest in the last decade.

\begin{definition}
For a real matrix $M$ with no zero entries, let $\text{sign}(M)$ denote the sign matrix such that $(\text{sign}(M))_{ij} = \text{sign}(M_{ij})$, for all $i,j$. The sign rank of a sign matrix $S$ is defined as
$$\text{sign-rank}(S) = \min\{\rank(M) : \text{sign}(M) = S\}.$$
\end{definition}

Sign matrices are instances of phase matrices and it is straightforward to see that sign rank is the restriction of phase rank to sign matrices:
$$\rank_{\text{phase}}(S)=\text{sign-rank}(S), \, \text{for any sign matrix }S.$$
Sign rank computation is typically extended to matrices with entries in $\{-1,0,1\}$, which we call generalized sign matrices, where the zeros in the sign pattern force the corresponding entries to be zero.

Computing sign rank is hard. More precisely, in \cite{bhangale2015complexity} it is shown that proving that the sign rank of an $n \times m$ sign matrix is at most $k$ is NP-hard for $k \geq 3$. In particular, computing phase rank is also NP-hard, as the sign rank is the restriction of phase rank to a specific set of matrices. For $k=2$ they provide a polynomial time algorithm, which is not obviously generalizable to the complex case, suggesting a first question.

\begin{question} \label{q:rank2}
Is there a polynomial time algorithm to decide if an $n \times m$ phase matrix has phase rank at most $2$?
\end{question}

Checking if a generalized sign matrix has maximal rank is an interesting problem on its own, known also as the signsolvability problem. It is NP-hard \cite{klee1984signsolvability} and has a very simple characterization.

\begin{lemma}[Remark 1.1. \cite{klee1984signsolvability}] \label{lem:lopsign}
For an $n \times m$ generalized sign matrix $S$, with $m \geq n$, $\text{sign-rank}(S)=n$ if and only if every scaling of its rows by scalars in $\{-1,0,1\}$ has a unisigned nonzero column, i.e., a column that has only zeros and $1$'s or $-1$'s but not both.
\end{lemma}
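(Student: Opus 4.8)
The plan is to prove Lemma~\ref{lem:lopsign} by connecting the rank condition to a statement about the existence of a sign vector in the kernel, and then reformulating that kernel condition combinatorially. Let me sketch the statement precisely. We have an $n \times m$ generalized sign matrix $S$ with $m \geq n$, and we want: $\text{sign-rank}(S) = n$ if and only if every scaling of the rows of $S$ by scalars in $\{-1,0,1\}$ produces at least one nonzero unisigned column.

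\begin{proof}[Proof sketch]
First I would unpack the failure of maximal rank. Since $m \geq n$, we have $\text{sign-rank}(S) = n$ if and only if \emph{every} real matrix $M$ with $\text{sign}(M) = S$ (in the generalized sense, respecting the forced zeros) has full row rank $n$; equivalently, the rows of every such $M$ are linearly independent. So $\text{sign-rank}(S) < n$ precisely when there exists a realization $M$ and a nonzero row-combination vector $c \in \mathbb{R}^n$ with $c^\top M = 0$. The core idea is to pass from the existence of such a real combination to a purely combinatorial sign statement, which is where the left-scaling by $\{-1,0,1\}$ enters.

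The key step is to reformulate the contrapositive. I would show that $\text{sign-rank}(S) < n$ if and only if there is a choice of signs $d \in \{-1,0,1\}^n$, not all zero, such that the row-scaled matrix $D S$ (where $D = \operatorname{diag}(d)$) has \emph{no} nonzero unisigned column. Forward direction: suppose some realization $M$ admits $c^\top M = 0$ with $c \neq 0$; take $d = \text{sign}(c)$ (entrywise), and in each column $j$, because $\sum_i c_i M_{ij} = 0$ with the summands having signs $c_i \cdot S_{ij} = d_i S_{ij}$, the nonzero terms must include both positive and negative signs (a sum of same-signed nonzero reals cannot vanish unless all terms are zero). Hence no column of $DS$ is nonzero unisigned, giving the desired scaling violating the unisigned property. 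Reverse direction: if $DS$ has no nonzero unisigned column, then for each column $j$ with a nonzero entry, both a $+$ and a $-$ appear among the $d_i S_{ij}$; I can then choose the moduli $|M_{ij}| > 0$ freely to make each column sum $\sum_i d_i |M_{ij}| S_{ij} = 0$ vanish, by balancing the positive contributions against the negative ones column by column. Setting $c_i = d_i$ (interpreted with the chosen moduli absorbed appropriately, i.e. the row combination weights match the signs in $d$) yields a nonzero left-kernel vector for a valid realization $M$, so $\text{sign-rank}(S) < n$.

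The main obstacle, and the step I would be most careful about, is the \emph{column-by-column independent moduli selection} in the reverse direction: I must verify that the freedoms in choosing the positive moduli $|M_{ij}|$ are genuinely unconstrained across columns, so that balancing the sign pattern in each column simultaneously is possible. This works because the moduli in distinct columns are independent variables and the forced zeros (from $S_{ij}=0$) are respected automatically, but one must check that every column either is entirely zero (in $DS$, hence contributes $0$ with no constraint) or contains both signs (hence can be balanced to $0$). Equivalently, the condition ``$DS$ has no nonzero unisigned column'' is exactly what guarantees each column is individually balanceable to zero, and since columns are independent, a single realization $M$ achieves $c^\top M = 0$ globally. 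Taking the contrapositive of the equivalence established in the key step then yields the lemma statement: $\text{sign-rank}(S) = n$ iff for every $d \in \{-1,0,1\}^n \setminus \{0\}$ the matrix $DS$ has a nonzero unisigned column, which is precisely the assertion that every scaling of the rows by $\{-1,0,1\}$ scalars produces a unisigned nonzero column.
\end{proof}
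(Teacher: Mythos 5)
Your argument is correct. Note that the paper does not actually prove this lemma --- it is quoted from Klee, Ladner and Manber's work on signsolvability --- so there is no in-paper proof to compare against; your proof is essentially a self-contained reconstruction of the classical argument, and it works. The two directions are both sound: from a nonzero left-kernel vector $c$ of a realization $M$ you pass to $d=\operatorname{sign}(c)$ and observe that a vanishing sum of reals whose nonzero terms all share a sign is impossible, so no column of $DS$ can be nonzero unisigned; conversely, given such a $d$, the moduli $|M_{ij}|$ in distinct columns are indeed independent free parameters, so each column that contains both a $+$ and a $-$ among the $d_iS_{ij}$ can be balanced to zero (and the all-zero columns of $DS$ contribute nothing), yielding a realization with $d$ in its left kernel. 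Two small points worth making explicit if you write this up: first, the lemma as stated must be read as excluding the all-zero scaling (otherwise the ``only if'' direction fails vacuously); you correctly restrict to $d\neq 0$, consistent with the ``not all zero'' phrasing of the analogous phase-matrix statement later in the paper. Second, in the reverse direction, for indices $i$ with $d_i=0$ but $S_{ij}\neq 0$ you must still assign $M_{ij}$ a nonzero entry of the correct sign to have a valid realization; this is harmless since those terms are annihilated by $c_i=d_i=0$, but it deserves a sentence.
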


In particular, a nongeneralized $n \times m$ sign matrix has maximal sign rank if and only if for any vector
$x \in \{-1,1\}^n$ either $x$ or $-x$ appears in its columns, which immediately implies that $n \times m$ sign matrices have nonmaximal sign rank for $m < 2^{n-1}$. Therefore, square $n \times n$ nongeneralized sign matrices cannot be sign-nonsingular for $n \geq 3$. In fact, Alon et al \cite{alon1985geometrical} show that the maximum sign rank of an $n\times n$ sign matrix is lower-bounded by $\frac{n}{32}$ and upper-bounded by $\frac{n}{2}(1+o(1))$.

Since computing sign rank is generally hard, a lot of effort was put into devising effective bounds for this quantity. For example, a well-known lower bound on the sign rank of an $m\times n$ sign matrix is due to Forster \cite{MR1964645}:

\begin{theorem}\label{thm:lbound_sign}
If $S$ is an $m\times n$  sign matrix, then $$\text{sign-rank}(S)\geq \frac{\sqrt{mn}}{||S||} ,$$ where $||S||$ is the spectral norm of $S$.
\end{theorem}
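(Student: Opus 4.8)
The inequality is Forster's bound, and the plan is to reprove it through his radial-isotropic-position technique. Write $r=\text{sign-rank}(S)$ and fix a real matrix $M$ with $\text{sign}(M)=S$ and $\rank(M)=r$. A full-rank factorization $M=UV$ with $U\in\RR^{m\times r}$ and $V\in\RR^{r\times n}$ produces vectors $u_1,\dots,u_m$ (the rows of $U$) and $v_1,\dots,v_n$ (the columns of $V$) in $\RR^r$ with $\text{sign}\langle u_i,v_j\rangle = S_{ij}$; in particular every such inner product is nonzero. The first reduction is to arrange that the $v_j$ are in \emph{general position} (every $r$ of them linearly independent): since the sign conditions are strict inequalities, a small generic perturbation of $V$ preserves all signs and achieves this while keeping the rank at most $r$.

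The crucial step is a balancing transformation: I claim there is an invertible $A\in\RR^{r\times r}$ such that the unit vectors $w_j = Av_j/\|Av_j\|$ satisfy $\sum_{j=1}^n w_j w_j^{\top} = \tfrac{n}{r} I_r$ (radial isotropic position). To produce $A$ I would minimize $\phi(A)=\sum_{j=1}^n \log\|Av_j\|^2$ over the set $\{\,|\det A|=1\,\}$; computing the matrix gradient, writing the Lagrange condition, and multiplying on the right by $A^{\top}$ yields exactly $\sum_j (Av_j)(Av_j)^{\top}/\|Av_j\|^2 = \lambda I$, and taking the trace forces $\lambda = n/r$. Replacing each $u_i$ by $A^{-\top}u_i$ leaves every $\langle u_i,v_j\rangle$ unchanged, so after normalising the transformed $u_i$ to unit vectors $\hat u_i$ the signs survive: $\text{sign}\langle \hat u_i, w_j\rangle = S_{ij}$, with both families now unit vectors.

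With the vectors balanced, I would run a two-sided estimate on $\sum_{i,j} S_{ij}\langle \hat u_i,w_j\rangle = \sum_{i,j}|\langle \hat u_i, w_j\rangle|$. For the lower bound, Cauchy--Schwarz gives $|\langle \hat u_i,w_j\rangle|\le 1$, hence $|\langle \hat u_i,w_j\rangle|\ge \langle \hat u_i,w_j\rangle^2$, and summing while using the isotropy identity together with $\|\hat u_i\|=1$ yields $\sum_{i,j}\langle \hat u_i,w_j\rangle^2 = \tfrac{mn}{r}$. For the upper bound, collect the inner products into the matrix $P=\hat U W$, where $\hat U$ has rows $\hat u_i^{\top}$ and $W$ has columns $w_j$, so that $\rank P\le r$; then $\sum_{i,j}S_{ij}\langle \hat u_i,w_j\rangle=\langle S,P\rangle_F\le \|S\|\,\|P\|_*$ by the duality between the spectral and nuclear norms, and since $\rank P\le r$ and $\|P\|_F^2 = \tfrac{mn}{r}$ (the same isotropy computation) we get $\|P\|_*\le\sqrt r\,\|P\|_F=\sqrt{mn}$. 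Chaining the two bounds gives $\tfrac{mn}{r}\le \|S\|\sqrt{mn}$, which rearranges to $r\ge \sqrt{mn}/\|S\|$.

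The main obstacle is the balancing lemma: one must show the infimum of $\phi$ on $\{\,|\det A|=1\,\}$ is attained, which is exactly where general position is essential. It guarantees coercivity ($\phi\to+\infty$ along any escaping sequence), since at most finitely many $v_j$ can lie in the subspace a degenerating $A$ collapses, so enough of the terms $\log\|Av_j\|^2$ blow up to dominate the few that decay. Everything else is a self-contained pair of norm estimates. As an alternative to the nuclear-norm duality in the upper bound, one can stay elementary and bound $\sum_i\|\sum_j S_{ij}w_j\|$ directly by Cauchy--Schwarz together with $\|WS^{\top}\|_F\le\|S\|\,\|W\|_F$ and $\|W\|_F^2=n$, arriving at the same constant $\|S\|\sqrt{mn}$.
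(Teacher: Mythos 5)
The paper does not prove this statement at all --- it is quoted as background (Theorem \ref{thm:lbound_sign}) with a citation to Forster's original article --- so there is no in-paper argument to compare against. Your proposal is a correct and essentially complete reconstruction of Forster's own proof: the factorization into unit vectors, the perturbation to general position, the radial-isotropic-position balancing lemma obtained by minimizing $\sum_j\log\|Av_j\|^2$ over $|\det A|=1$, and the two-sided estimate of $\sum_{i,j}|\langle\hat u_i,w_j\rangle|$ (lower bound $mn/r$ from isotropy and $|x|\ge x^2$ for $|x|\le1$; upper bound $\|S\|\sqrt{mn}$ from the nuclear/spectral duality with $\|P\|_*\le\sqrt{r}\,\|P\|_F$) all check out, and you correctly identify the coercivity of the balancing functional as the one point needing the general-position hypothesis.
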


Alon et al. \cite{alon2016sign} observed that Forster's proof argument works as long as the entries of the matrix are not too close to zero, so we can replace $||S||$ with $||S||^*=\min\{||M||:M_{ij}S_{ij}\geq 1 \, \text{ for all }i,j\} = \min\{||M||:\text{sign}(M) = S \text{ and } |M_{ij}|\geq 1 \, \forall i,j\}$ in the above bound, which constitutes an improvement, since $||S||^*\leq ||S||$. Another improvement to Forster's bound, through a different approach, can be found in \cite{linial2007complexity} and it is based on the factorization of linear operators and uses the $\gamma^*_2$ norm the dual norm to the $\gamma_2$ norm.

\begin{theorem}[\cite{linial2007complexity}]\label{thm:lbound_sign2}
For every  $m\times n$ sign matrix $S$, $$\text{sign-rank}(S)\geq \frac{mn}{\gamma^*_2(S)}.$$
\end{theorem}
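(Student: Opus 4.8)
The plan is to combine Forster's radial isotropic position (the device behind Theorem~\ref{thm:lbound_sign}) with trace duality between $\gamma_2$ and its dual $\gamma_2^*$. Recall that $\gamma_2$ is the factorization norm $\gamma_2(A)=\min\{(\max_i\|x_i\|)(\max_j\|y_j\|): A_{ij}=\langle x_i,y_j\rangle\}$, so that by definition $\gamma_2^*(S)=\max\{\langle S,A\rangle:\gamma_2(A)\le 1\}$, where $\langle\cdot,\cdot\rangle$ denotes the entrywise (trace) inner product. Setting $r=\text{sign-rank}(S)$, I want to produce a single test matrix $G$ with $\gamma_2(G)\le 1$ for which $\langle S,G\rangle\ge mn/r$: the mere feasibility of $G$ in the dual program then gives $\gamma_2^*(S)\ge \langle S,G\rangle\ge mn/r$, which rearranges to the asserted inequality.

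To build $G$, first realize the sign rank geometrically. A rank-$r$ matrix $M$ with $\text{sign}(M)=S$ factors as $M=UV^\top$, yielding vectors $u_1,\dots,u_m$ and $v_1,\dots,v_n$ in $\mathbb{R}^r$ with $\text{sign}\langle u_i,v_j\rangle=S_{ij}$. The crucial freedom is that for any $A\in GL_r(\mathbb{R})$ the substitution $u_i\mapsto Au_i$, $v_j\mapsto A^{-\top}v_j$ preserves every inner product $\langle u_i,v_j\rangle$, and hence all signs. After a harmless perturbation placing the $u_i$ in general position (strict signs survive small perturbations, since $S$ has no zeros), Forster's theorem supplies an $A$ that moves the normalized vectors $\hat u_i:=Au_i/\|Au_i\|$ into radial isotropic position, $\sum_i \hat u_i\hat u_i^\top=\frac{m}{r}I_r$. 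Putting $\hat v_j:=A^{-\top}v_j/\|A^{-\top}v_j\|$ and $G_{ij}:=\langle \hat u_i,\hat v_j\rangle$, the matrix $G$ has a factorization through unit vectors, so $\gamma_2(G)\le 1$ and $|G_{ij}|\le 1$ by Cauchy--Schwarz, while $\text{sign}(G)=S$ by construction.

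It then remains to estimate $\langle S,G\rangle$. Since $\text{sign}(G)=S$ we have $\langle S,G\rangle=\sum_{ij}|G_{ij}|$, and because $|G_{ij}|\le 1$ this dominates $\sum_{ij}G_{ij}^2=\|G\|_F^2$. The isotropy is precisely what forces the Frobenius norm to be large: $\|G\|_F^2=\sum_j \hat v_j^\top\big(\sum_i\hat u_i\hat u_i^\top\big)\hat v_j=\frac{m}{r}\sum_j\|\hat v_j\|^2=\frac{mn}{r}$. Chaining the estimates yields $\gamma_2^*(S)\ge\langle S,G\rangle\ge\|G\|_F^2=mn/r$, which is the theorem. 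I expect the only genuine obstacle to be the isotropic-position step: verifying that the $u_i$ can be perturbed into general position without destroying the strict sign conditions, and invoking the existence of Forster's balancing transformation $A$ (valid here since $r\le\min(m,n)\le m$). Everything after that is a short duality-plus-Frobenius computation.
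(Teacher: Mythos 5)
Your proof is correct, and the paper itself gives no argument here: Theorem~\ref{thm:lbound_sign2} is quoted from \cite{linial2007complexity} without proof. Your derivation (perturb to general position, apply Forster's radial isotropic position to the $u_i$, transform the $v_j$ contragradiently, and then chain $\gamma_2^*(S)\ge\langle S,G\rangle\ge\|G\|_F^2=mn/r$ via trace duality) is exactly the standard argument from that reference, with all the delicate points --- preservation of strict signs under perturbation, $\gamma_2(G)\le 1$ from the unit-vector factorization, and the Frobenius computation from isotropy --- handled correctly.
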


\begin{example}Let
$$S=\begin{bmatrix}
     1 & 1 & 1 & 1 & -1 & 1\\
    -1 & 1 & -1 & -1 & -1 & 1\\
    -1 & 1 & -1 & 1 & 1 & 1\\
    -1 & -1 & 1 & -1 & -1 & 1\\
     1 & 1 & -1 & -1 & -1 & -1\\
     1 & -1 & -1 & -1 & -1 & 1
\end{bmatrix}.$$
For this matrix we have $\frac{\sqrt{mn}}{||S||}=1.7990$  and $\frac{mn}{\gamma^*_2(S)}=2.0261.$
Thus, $$\text{sign-rank}(S)\geq 3.$$
\end{example}

Given a particular sign matrix $S$, there are not that many tools to upper bound its sign-rank, besides the ones that are given by any explicit matrix with the given sign pattern. A general bound, based only on the dimension of the matrix, can be obtained using the probabilistic method.

\begin{theorem} [Theorem 13.3.1 \cite{MR1885388}] \label{thm:ubound_sign}
For any $m\times n$ sign matrix $S$, $$\text{sign-rank}(S) \leq \frac{\min\{m,n\}+1}{2}+\sqrt{\frac{\min\{m,n\}-1}{2}\log(\max\{m,n\})}.$$
\end{theorem}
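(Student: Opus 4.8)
The plan is to prove the bound by the \emph{polynomial method}, realizing the prescribed sign pattern through evaluations of low-degree univariate polynomials and reducing the problem to controlling the number of sign changes along a cleverly chosen ordering of the shorter dimension. Since $\text{sign-rank}(S)=\text{sign-rank}(S^{\top})$, I would first assume without loss of generality that $m=\min\{m,n\}\le n$. The crucial observation is the following representation. Assign to the $m$ rows distinct reals $t_1,\dots,t_m$ and, reading each column $j$ as a $\pm 1$ sequence over the ordered values $t_1<\cdots<t_m$, let $s_j$ denote its number of sign changes. Placing a simple real root in each gap where column $j$ changes sign produces a polynomial $q_j$ of degree exactly $s_j$ with $\text{sign}(q_j(t_i))=S_{ij}$ for all $i$. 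Setting $D=\max_j s_j$, $v_i=(1,t_i,t_i^2,\dots,t_i^D)$ and letting $u_j$ be the coefficient vector of $q_j$, the matrix $M_{ij}=\langle v_i,u_j\rangle=q_j(t_i)$ factors as a product of an $m\times(D+1)$ and a $(D+1)\times n$ matrix, so $\text{sign}(M)=S$ and
\[
\text{sign-rank}(S)\le D+1=1+\max_{j}s_j.
\]

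Thus the task reduces to finding an ordering of the $m$ rows for which every column has few sign changes, and I would choose this ordering \emph{at random}, assigning $t_1,\dots,t_m$ to the rows by a uniformly random permutation. For a fixed column $j$ with $a_j$ entries equal to $+1$ and $b_j=m-a_j$ equal to $-1$, each of the $m-1$ adjacent pairs is a sign change with probability $\frac{2a_jb_j}{m(m-1)}$, so by linearity
\[
\mathbb{E}[s_j]=\frac{2a_jb_j}{m}\le\frac{m}{2}.
\]
Every column therefore has on the order of $m/2$ sign changes in expectation, which already recovers the leading term $\min\{m,n\}/2$ of the bound; what remains is to make this hold simultaneously for all columns under a single ordering.

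The remaining and most delicate step is to upgrade this expectation into a high-probability bound holding simultaneously over all $n$ columns. Writing $s_j$ as a sum of the $m-1$ adjacency indicators --- bounded random variables that are negatively associated in a uniform permutation --- I would apply a Hoeffding/McDiarmid-type concentration inequality to obtain a tail estimate $\mathbb{P}(s_j\ge\mathbb{E}[s_j]+t)\le\exp(-\tfrac{2t^2}{m-1})$ and then union bound over $j=1,\dots,n$. Taking $t\approx\sqrt{\tfrac{m-1}{2}\log n}$ drives the total failure probability below $1$, so some ordering realizes $\max_j s_j\le\tfrac{m}{2}+\sqrt{\tfrac{m-1}{2}\log n}$, which together with the first display gives the claimed estimate up to the additive constant, with the $\log(\max\{m,n\})$ factor arising from the union bound over the $n$ columns. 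I expect the main obstacle to be exactly this concentration step. First, the adjacency indicators are not independent, so the Hoeffding-type tail must be justified through negative association or a bounded-differences martingale over the random permutation. Second, and more subtly, recovering the precise constants of the theorem --- centering at $\tfrac{m-1}{2}$ rather than $\tfrac{m}{2}$, so as to produce the exact leading term $\tfrac{\min\{m,n\}+1}{2}$ --- requires a careful second-moment computation or a sharpened concentration estimate in place of the crude bounds used above.
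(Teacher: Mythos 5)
The paper does not actually prove this statement---it is imported verbatim from Alon and Spencer (their Theorem 13.3.1)---so there is no internal proof to compare against. Your outline coincides with the proof in that source: reduce to bounding the maximum number of sign changes per column over some row ordering via the Vandermonde/polynomial factorization (giving $\text{sign-rank}(S)\le 1+\max_j s_j$), then pick the ordering uniformly at random and combine a per-column tail bound with a union bound over the $n$ columns. The first two steps are correct as written, including $\mathbb{E}[s_j]=2a_jb_j/m\le m/2$.

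The genuine gap is the one you flagged yourself: the concentration step, which is where the content of the theorem actually lives. The tail $\Pr(s_j\ge\mathbb{E}[s_j]+t)\le\exp(-2t^2/(m-1))$ is precisely a Hoeffding bound for $m-1$ independent indicators, but the adjacency indicators $X_i=\mathbf{1}[\text{slots } i, i+1 \text{ carry opposite signs}]$ are dependent, and their negative association is not a fact you can simply invoke: each $X_i$ is a non-monotone function of the permutation, so it does not follow from the standard negative association of occupancy indicators, while a bounded-differences/Azuma argument over the permutation loses a constant factor in the exponent that the stated $\sqrt{\tfrac{m-1}{2}\log n}$ term cannot absorb. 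The rigorous route is not a generic concentration inequality but the exact distribution of the number of sign changes in a uniformly random arrangement of $a_j$ pluses and $m-a_j$ minuses: the number of arrangements with a prescribed number of runs is an explicit product of binomial coefficients, and its upper tail is bounded by that of $\mathrm{Bin}(m-1,\tfrac12)$, giving $\Pr\bigl(s_j\ge\tfrac{m-1}{2}+t\bigr)\le e^{-2t^2/(m-1)}$ centered at $\tfrac{m-1}{2}$ rather than at $\mathbb{E}[s_j]$. That recentering is exactly what produces the leading term $\tfrac{m+1}{2}=1+\tfrac{m-1}{2}$ and also repairs your union bound, which with $t=\sqrt{\tfrac{m-1}{2}\log n}$ yields total failure probability exactly $1$ rather than strictly less. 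So the architecture is right, but the single step you deferred to ``a Hoeffding/McDiarmid-type inequality'' is the step that must be done by hand, and the generic tools you name either do not apply or do not deliver the stated constants.
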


One can see that the above inequality is not tight. For $n=m=3$, for instance, the sign rank of any sign matrix is at most $2$, but the bound only guarantees that it is at most $3$. Is is believed, however, that asymptotically this bound is near optimal.

\subsection{Coamoebas of determinantal varieties}

As mentioned in the introduction, coamoebas are the images of varieties under taking the entrywise arguments, that is, under the map $$\text{Arg}: {(\mathbb{C}^{*})}^{n} \longrightarrow {(S^1)}^n, z=(z_1,\ldots,z_n)\rightarrow \bigg(\frac{z_1}{|z_1|},\ldots,\frac{z_n}{|z_n|}\bigg).$$
We will identify $S^1$ with the interval $[0,2\pi)$ in the usual way for purposes of plotting.

\begin{definition}
Given a complex variety $V \subseteq \mathbb{C}^{n}$, its \textit{coamoeba} is the set
$$\text{co}\mathcal{A}(V)=\{ \text{Arg}(z)=\bigg(\frac{z_1}{|z_1|},\ldots,\frac{z_n}{|z_n|}\bigg):z\in V \cap {(\mathbb{C}^{*})}^{n}\}.$$
\end{definition}

The connection between phase rank and coamoebas is clear. Given positive integers $n,m$ and $k$, with $k \leq \min\{n,m\}$, we define the determinantal variety $Y_{k}^{n,m}$ as the set
of all $n\times m$ complex matrices of rank at most $k$, cut out by the $k+1$ minors of an $n\times m$ matrix of variables. Directly from the definition of coamoeba, we have that the locus of $n\times m$ matrices of phase rank at most $k$ is the coamoeba of the determinantal variety $Y_{k}^{n,m}$. Computing phase rank is therefore a special case of the problem of checking coamoeba membership, which is a notoriously hard problem in general. There is however a necessary condition for coamoeba membership, called colopsidedness condition (\cite{forsgaard2015order},\cite{de2013geometry},\cite{forsgaard2012hypersurface}).

\begin{definition}
A sequence $z_1,z_2,\ldots,z_k \in \mathbb{C}$ is colopsided if, when considered as a set of points in $\mathbb{R}^2$, $0$ is not in the relative interior of its convex hull.

Given a finite multivariate complex polynomial $f(z) = \sum_{\alpha} b_{\alpha} z^{\alpha} \in \mathbb{C}[z_1,\ldots,z_n]$, $b_{\alpha}\neq 0$, and $\phi \in {(S^1)}^n$, we say that $f$ is colopsided at $\phi$ if the sequence
$$\bigg(\frac{b_{\alpha}}{|b_{\alpha}|}\phi^{\alpha}\bigg)_{\alpha}$$
is colopsided.
\end{definition}

For $S\subseteq \mathbb{R}^2$, we will denote its convex hull by $\text{Conv}(S)$. In practice, $0$ is in the relative interior of $\text{Conv}(S)$ if it can be written as a convex combination of the points in $S$ with all the coefficients strictly positive.

It is easy to see the following relation between colopsidedness and coamoebas.
\begin{lemma}\label{lem:colop_coamoeba}
If $\phi \in \text{co}\mathcal{A}(V(I))$, then no $f \in I$ is colopsided at $\phi$.
\end{lemma}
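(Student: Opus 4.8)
The plan is to peel back both definitions and reduce the statement to the convex-combination criterion for the relative interior quoted just above. The starting point is that $\phi \in \text{co}\mathcal{A}(V(I))$ supplies, by definition, a witness point $z \in V(I) \cap (\mathbb{C}^{*})^n$ with $\text{Arg}(z) = \phi$, that is, $z_j = |z_j|\,\phi_j$ for every coordinate $j$. I would then fix an arbitrary $f = \sum_{\alpha} b_{\alpha} z^{\alpha} \in I$ with $b_{\alpha} \neq 0$, and use that $z$ lies on the variety, so that $f(z) = 0$.

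The key algebraic step is the factorization of the monomials at the witness point. Writing $|z|^{\alpha} := \prod_j |z_j|^{\alpha_j}$, one has $z^{\alpha} = |z|^{\alpha}\,\phi^{\alpha}$, where $|z|^{\alpha}$ is a \emph{strictly positive} real number precisely because $z \in (\mathbb{C}^{*})^n$. Substituting into $f(z) = 0$ and splitting each coefficient as $b_{\alpha} = |b_{\alpha}|\cdot (b_{\alpha}/|b_{\alpha}|)$ yields the identity
$$0 \;=\; \sum_{\alpha} \bigl(|b_{\alpha}|\,|z|^{\alpha}\bigr)\left(\frac{b_{\alpha}}{|b_{\alpha}|}\,\phi^{\alpha}\right),$$
which exhibits $0$ as a linear combination, with coefficients $c_{\alpha} := |b_{\alpha}|\,|z|^{\alpha}$, of exactly the points $w_{\alpha} := (b_{\alpha}/|b_{\alpha}|)\,\phi^{\alpha}$ that appear in the colopsidedness test for $f$ at $\phi$.

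To finish, I would note that every $c_{\alpha}$ is strictly positive, so after normalizing by $\sum_{\alpha} c_{\alpha} > 0$ the identity becomes a genuine convex combination $0 = \sum_{\alpha} \lambda_{\alpha} w_{\alpha}$ with all $\lambda_{\alpha} > 0$. By the relative-interior criterion recorded after the definition of colopsidedness, this places $0$ in the relative interior of $\text{Conv}\{w_{\alpha}\}$, which is exactly the failure of colopsidedness of the sequence $(w_{\alpha})_{\alpha}$. Hence $f$ is not colopsided at $\phi$, and since $f$ was an arbitrary element of $I$, the claim follows.

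The argument is essentially a direct unwinding of the definitions, so there is no deep obstacle; the single point that must be handled with care is the strict positivity of the coefficients $c_{\alpha}$. This is precisely what upgrades the conclusion from ``$0 \in \text{Conv}\{w_{\alpha}\}$'' to ``$0$ lies in the \emph{relative interior}'', and it rests crucially on both hypotheses $b_{\alpha} \neq 0$ and $z \in (\mathbb{C}^{*})^n$: were either to fail, one could only guarantee that $0$ lands on the boundary of the hull, and the colopsidedness conclusion would break down.
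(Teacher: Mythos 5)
Your argument is correct and complete: it is the standard direct unwinding of the definitions, producing from a witness $z\in V(I)\cap(\mathbb{C}^*)^n$ a strictly positive combination of the points $(b_\alpha/|b_\alpha|)\phi^\alpha$ summing to $0$, and then invoking the relative-interior criterion stated in the paper. The paper itself offers no proof of this lemma (it is dismissed as ``easy to see''), and your write-up is exactly the intended argument, with the one genuinely delicate point --- strict positivity of the coefficients $|b_\alpha|\,|z|^\alpha$, needed to land in the relative interior rather than merely the hull --- correctly identified and justified.
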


Note that the converse statement is, in general, not true. Not being colopsided for any $f \in I$ does not necessarily imply membership in the coamoeba $\text{co}\mathcal{A}(V(I))$.

\section{Nonmaximal phase rank}\label{sec:nonmaximal_phase}

While determining in general if the phase rank is less or equal than a certain $k$ is a hard problem, two cases are at least somewhat understood. The first is the case of rank one case, which reduces to the usual rank.

\begin{proposition} \label{prop:rank1phase}
For any phase matrix $\Theta$, $\rank_{\text{phase}}(\Theta)=1$ if and only if $\rank (\Theta)=1$.
\end{proposition}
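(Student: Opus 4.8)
The plan is to prove the two implications separately, with the forward implication being essentially immediate and the reverse one relying on the outer-product structure of rank-one matrices.

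First I would handle the easy direction. If $\rank(\Theta)=1$, then since every entry of $\Theta$ already has modulus one, $\Theta$ is itself an admissible competitor in the minimization defining the phase rank: its entries are nonzero and trivially satisfy $\Theta_{ij}/|\Theta_{ij}|=\Theta_{ij}$. Hence $\rank_{\text{phase}}(\Theta)\leq \rank(\Theta)=1$. Combined with the trivial lower bound $\rank_{\text{phase}}(\Theta)\geq 1$, which holds because any admissible $M$ has all entries nonzero and is therefore a nonzero matrix of rank at least one, this gives equality.

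For the reverse direction, suppose $\rank_{\text{phase}}(\Theta)=1$, so there is a matrix $M\in(\mathbb{C}^{*})^{n\times m}$ of rank $1$ with $M_{ij}/|M_{ij}|=\Theta_{ij}$ for all $i,j$. Since $M$ has rank one, I can write $M=uv^{T}$ for some vectors $u\in\mathbb{C}^{n}$ and $v\in\mathbb{C}^{m}$. The key observation is that because every entry $M_{ij}=u_iv_j$ is nonzero, no coordinate of $u$ or $v$ can vanish, so in fact $u\in(\mathbb{C}^{*})^{n}$ and $v\in(\mathbb{C}^{*})^{m}$. This makes the phases $a_i:=u_i/|u_i|$ and $b_j:=v_j/|v_j|$ well-defined elements of $S^1$.

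Finally I would read off the phases. The phase of $M_{ij}=u_iv_j$ is the product of the phases of its factors, so $\Theta_{ij}=a_i b_j$; equivalently $\Theta=ab^{T}$ with $a=(a_i)_i$ and $b=(b_j)_j$. This exhibits $\Theta$ as an outer product of two nonzero vectors, so $\rank(\Theta)\leq 1$, and since $\Theta\neq 0$ we conclude $\rank(\Theta)=1$. There is no genuine obstacle here; the only point requiring slight care is confirming that the rank-one factors inherit nonvanishing coordinates from the nonvanishing entries of $M$, as this is precisely what makes the entrywise phase factorization legitimate.
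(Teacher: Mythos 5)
Your proof is correct and follows essentially the same route as the paper: write the rank-one witness as an outer product $uv^{T}$, pass to the entrywise phases of $u$ and $v$, and observe that $\Theta$ is then itself an outer product. The only difference is that you spell out the (easy) point that the factors have nonvanishing coordinates, which the paper leaves implicit.
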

\begin{proof}
Since $\rank_{\text{phase}}(\Theta) \leq \rank (\Theta)$, the result follows immediately if one can show that $\rank_{\text{phase}}(\Theta) =1$ implies that $\rank(\Theta)=1$. But if $M$ is the rank one matrix with the same phases as $\Theta$, we have $M=v w^t$ for some complex vectors $v$ and $w$. Let $\bar{v}$ and $\bar{w}$ be the vectors whose entries are the phases of $v$ and $w$, respectively. It is trivial to see that $\Theta = \bar{v} \bar{w}^t$, which completes the proof.
\end{proof}

The other is the case of nonmaximal rank. As mentioned earlier, the problem of determining under which conditions is the phase rank of a generalized phase matrix nonmaximal has been studied previously in the literature, under the name of ray nonsingularity \cite{mcdonald1997ray}, with a focus mostly on the square case. In the remainder of this section we will mainly present a survey of the implications of that work in the study of phase rank, with a few additional insights.

The first important observation is that we still have a version of Lemma \ref{lem:lopsign} for rectangular matrices.

\begin{lemma}[\cite{mcdonald1997ray}] \label{lem:lopphase}
An $n \times m$ phase matrix $\Theta$, with $m \geq n$, has $\rank_{\text{phase}}(\Theta)<n$ if and only if there is a scaling of its rows by scalars in $S^1 \cup \{0\}$, not all zero, such that no column is colopsided.
\end{lemma}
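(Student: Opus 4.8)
The plan is to translate the rank condition into the language of a left null vector and then match it, column by column, with the colopsidedness criterion. Since $m\ge n$, the maximal possible rank is $n$, and for an $n\times m$ matrix $M$ one has $\rank(M)<n$ if and only if the rows of $M$ are linearly dependent, i.e.\ if and only if there is a nonzero vector $y\in\mathbb C^n$ with $y^{t}M=0$. Writing a candidate realization as $M=R*\Theta$ with $R$ a positive matrix, the entry of $y^{t}M$ in column $j$ is $\sum_i y_i\,R_{ij}\,\Theta_{ij}$, and the key observation is that the phase of $y_i R_{ij}\Theta_{ij}$ is exactly $(y_i/|y_i|)\Theta_{ij}$ while its modulus $|y_i|R_{ij}$ is an arbitrary positive number. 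Thus the vanishing of column $j$ says precisely that $0$ is a strictly positive combination of the points $(y_i/|y_i|)\Theta_{ij}$ over the support of $y$, which by the relative-interior characterization recalled after the definition of colopsidedness is exactly the statement that this scaled column is not colopsided.

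For the forward direction, I would assume $\rank_{\text{phase}}(\Theta)<n$ and pick a witness $M=R*\Theta$ with $R$ positive and $\rank(M)<n$. Take a nonzero left null vector $y$ and define the row scalars $c_i=y_i/|y_i|$ when $y_i\neq0$ and $c_i=0$ otherwise; these lie in $S^1\cup\{0\}$ and are not all zero. For every column $j$ the identity $0=\sum_i y_i R_{ij}\Theta_{ij}=\sum_{i:\,y_i\neq0}(|y_i|R_{ij})\,(c_i\Theta_{ij})$ exhibits $0$ as a positive combination of the nonzero entries of the $c$-scaled column, so no column is colopsided. Here one uses that adjoining the zero entries (those $i$ with $c_i=0$) to the point set does not affect whether $0$ lies in the relative interior of the convex hull.

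For the converse, suppose a scaling $c\in(S^1\cup\{0\})^n$, not all zero, makes every column non-colopsided. For each $j$, the relative-interior characterization yields coefficients $\lambda_{ij}>0$, indexed by the support $T$ of $c$, with $\sum_{i\in T}\lambda_{ij}\,(c_i\Theta_{ij})=0$. I would then take $y=c$ as the prospective null vector and build the modulus matrix by setting $R_{ij}=\lambda_{ij}$ for $i\in T$ and $R_{ij}=1$ for $i\notin T$; this keeps all entries of $R$ strictly positive, as required for $M=R*\Theta$ to be a genuine (entrywise nonzero) realization of the phases. Then $(y^{t}M)_j=\sum_{i\in T}\lambda_{ij}\,c_i\Theta_{ij}+\sum_{i\notin T}1\cdot0\cdot\Theta_{ij}=0$ for every $j$, so $y\neq0$ is a left null vector of $M$ and $\rank_{\text{phase}}(\Theta)\le\rank(M)<n$.

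The step I expect to require the most care is the converse, specifically the observation that the per-column coefficients $\lambda_{ij}$ are allowed to vary with $j$: a naive attempt to use a single fixed scaling of the columns would fail, and the resolution is exactly that the modulus matrix $R$ is free to be chosen entrywise, so the column-dependent coefficients can be absorbed into it while one phase vector $c$ serves simultaneously as the row scaling and the left null vector. The remaining bookkeeping — verifying that the origin entries coming from outside the support of $c$ do not interfere with the relative-interior condition, and that $R$ can be kept strictly positive off the support — is routine.
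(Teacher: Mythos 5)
The paper states this lemma without proof, citing the ray--nonsingularity literature (\cite{mcdonald1997ray}), so there is no in-text argument to compare against. Your proof is correct and is the natural argument behind the cited result: the dictionary between a left null vector $y$ of $M=R*\Theta$ and the row scaling $c_i=y_i/|y_i|$ (or $0$), together with the fact that $0$ lies in the relative interior of the convex hull of a finite point set exactly when it is a strictly positive combination of those points. You correctly isolate the two points that need care: that the coefficients $\lambda_{ij}$ may vary with $j$ because they are absorbed into the freely chosen modulus matrix $R$, and that adjoining the origin to a point set changes neither the convex hull nor whether $0$ lies in its relative interior, so the zeroed-out rows are harmless in both directions.
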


\begin{example} \label{ex:3x3phase}
Consider the matrix
$$ \Theta = \begin{bmatrix}
     1  & 1                    & 1 \\
     i  & e^{i \frac{\pi}{4}}  & e^{i 2\frac{\pi}{3}} \\
    -i  & e^{i \frac{7\pi}{6}} & e^{i 4\frac{\pi}{3}}
\end{bmatrix}.$$
The convex hulls of the entries in each column are shown in Figure \ref{fig:columnscolopsided}. Note that $0$ is in the convex hull of the entries in the first column but not in its (relative) interior, so this column is colopsided.
\begin{figure}[H]
  \centering
    \centerline{\includegraphics[width=0.3\textwidth]{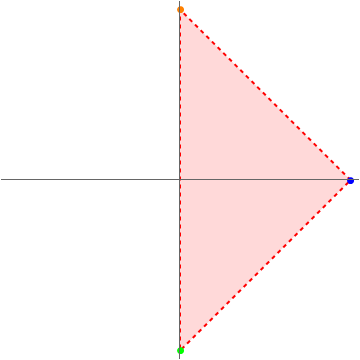} \hfill \includegraphics[width=0.3\textwidth]{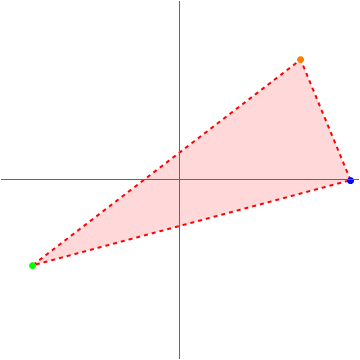} \hfill  \includegraphics[width=0.3\textwidth]{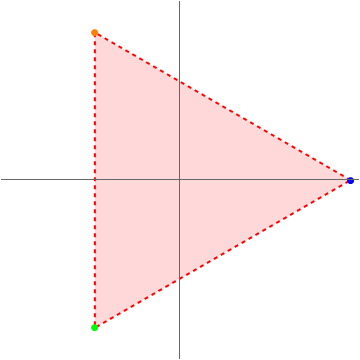}}
	\caption{Convex hulls of the entries of the columns of $\Theta$}
	\label{fig:columnscolopsided}
\end{figure}
Scaling a row of $\Theta$ by $e^{i\theta}$ ($\theta>0$) corresponds to rotating counterclockwise the points in the diagrams associated to that row through an angle of $\theta$. If we multiply the second row of $\Theta$ by $e^{i \frac{\pi}{4}}$, that is, if we rotate counterclockwise through an angle of $\frac{\pi}{4}$ the points in orange, we see that $0$ will be in the interior of all convex hulls (see Figure \ref{fig:columnscolopsided2}). 
\begin{figure}[H]
  \centering
    \centerline{\includegraphics[width=0.3\textwidth]{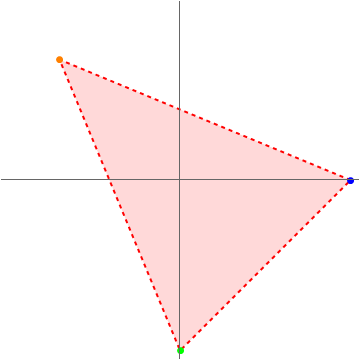} \hfill \includegraphics[width=0.3\textwidth]{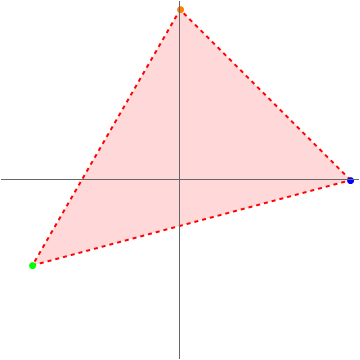} \hfill  \includegraphics[width=0.3\textwidth]{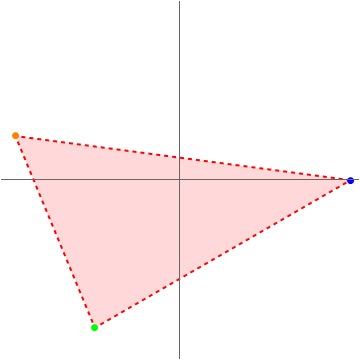}}
	\caption{Convex hulls of the entries of the columns of $\Theta$ after scaling the second row by $e^{i \frac{\pi}{4}}$.}
	\label{fig:columnscolopsided2}
\end{figure}
\end{example}

The criterion in Lemma \ref{lem:lopphase} is not necessarily easy to use, as searching for such a noncolopsided scaling is a nonconvex problem. There is another geometric way of thinking on this criterion. Consider an $n \times m$ phase matrix $\Theta$ and suppose that any submatrix obtained by erasing one of its rows has maximal phase rank (otherwise $\Theta$ is phase rank deficient). This is equivalent to asserting that in the row scalings of Lemma \ref{lem:lopphase} we only need to use scalars in $S^1$, as we will not zero out any row. Define $$\text{Colop}(n)=\{ y \in (S^1)^n \ : \ (y_1,...,y_n) \text{ is colopsided}\}.$$ The condition that all row scalings have a colopsided column is then easy to characterize geometrically using this set.

\begin{proposition}  \label{prop:lopgeom}
Let $\Theta$ be an $n \times m$ phase matrix, with $m \geq n$,  and assume that any submatrix obtained from $\Theta$ by erasing one of its rows has phase rank $n-1$. Then,
$\rank_{\text{phase}}(\Theta)=n$ if and only if
$$\cup_{j=1}^m (\text{Colop}(n)/\Theta^j) = (S^1)^n$$
where $\Theta^j$ is the $j$-th column of $\Theta$ and the division operation considered is the entrywise division.
\end{proposition}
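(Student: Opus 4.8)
The plan is to prove the statement as a chain of equivalences obtained by translating the combinatorial criterion of Lemma \ref{lem:lopphase} into the geometric language of the sets $\text{Colop}(n)/\Theta^j$. The starting observation is purely notational: scaling the rows of $\Theta$ by a vector $d=(d_1,\dots,d_n)$ turns the $j$-th column $\Theta^j$ into the entrywise product $d*\Theta^j$. Hence the scaled column is, or is not, colopsided exactly according to whether $d*\Theta^j\in\text{Colop}(n)$. Since $\Theta^j\in(S^1)^n$, entrywise division by $\Theta^j$ is a bijection of $(S^1)^n$, and for $d\in(S^1)^n$ one has $d\in\text{Colop}(n)/\Theta^j$ iff $d*\Theta^j\in\text{Colop}(n)$, i.e. iff the $j$-th column of the scaled matrix is colopsided. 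This dictionary is what turns the proposition into a near-tautological consequence of Lemma \ref{lem:lopphase}, once the scalars are handled.

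The one genuine step is justifying that, under the standing hypothesis, the scalings in Lemma \ref{lem:lopphase} may be taken in $(S^1)^n$ rather than in $(S^1\cup\{0\})^n$. I would first record the auxiliary fact that adjoining or deleting the point $0$ does not change colopsidedness: for a finite $S\subseteq\mathbb{C}$, using the characterization that $0$ lies in $\text{relint}(\text{Conv}(S))$ iff it is a convex combination of the points of $S$ with all coefficients strictly positive, a direct computation shows $S$ is colopsided iff $S\cup\{0\}$ is. Now suppose $\rank_{\text{phase}}(\Theta)<n$ and pick, by Lemma \ref{lem:lopphase}, a nonzero scaling $d\in(S^1\cup\{0\})^n$ with no colopsided column. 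If some $d_i=0$, delete row $i$ to obtain the $(n-1)\times m$ submatrix $\Theta'$; the induced scaling $d'$ is still nonzero, and since the point sets of column $j$ of scaled $\Theta$ and of scaled $\Theta'$ differ only by possibly including $0$, the auxiliary fact gives that no column of scaled $\Theta'$ is colopsided either. Lemma \ref{lem:lopphase} then forces $\rank_{\text{phase}}(\Theta')<n-1$, contradicting the hypothesis that every one-row deletion has phase rank $n-1$. Hence $d\in(S^1)^n$.

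With the scalars restricted to $(S^1)^n$, negating Lemma \ref{lem:lopphase} yields: $\rank_{\text{phase}}(\Theta)=n$ iff for every $d\in(S^1)^n$ at least one column $d*\Theta^j$ is colopsided. Feeding in the dictionary from the first paragraph, this reads as: for every $d\in(S^1)^n$ there exists $j$ with $d\in\text{Colop}(n)/\Theta^j$, that is, $(S^1)^n\subseteq\bigcup_{j=1}^m(\text{Colop}(n)/\Theta^j)$. The reverse inclusion is automatic because each $\text{Colop}(n)/\Theta^j$ is a subset of $(S^1)^n$, so the containment upgrades to the claimed equality, completing the argument.

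I expect the only delicate point to be the reduction to nonzero scalars in the second paragraph --- in particular getting the quantifiers and the ``insert a zero'' colopsidedness fact exactly right --- whereas the remaining steps are a matter of carefully unwinding the definitions of $\text{Colop}(n)$, of entrywise division, and of the colopsidedness of a scaled column.
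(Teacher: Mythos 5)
Your proof is correct and takes essentially the same approach as the paper's: it identifies $\text{Colop}(n)/\Theta^j$ as the set of unit-modulus scalings making column $j$ colopsided and then reads the claim off Lemma \ref{lem:lopphase}. The only difference is that you carefully justify the reduction from scalings in $(S^1\cup\{0\})^n$ to scalings in $(S^1)^n$ (via the deletion-of-a-zeroed-row argument and the fact that adjoining $0$ to a point set does not affect colopsidedness), a step the paper treats as an immediate consequence of the hypothesis in the discussion preceding the proposition.
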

\begin{proof}
Notice that
$$\text{Colop}(n)/\Theta^j = \{y \in (S^1)^n : (y_1 \Theta_{1j}, y_2 \Theta_{2j},...,y_n \Theta_{nj}) \text{ is colopsided}\}$$
is the set of all nonzero scalings that will make column $j$ colopsided. The union of all such sets is the collection of all scalings that will make some column colopsided, so if it does not cover the whole set of possible scalings, there will be a scaling that satisfies the conditions of Lemma \ref{lem:lopphase}.
\end{proof}

\begin{example}
When computing the phase rank of a phase matrix one may always assume the matrix first row to be all $1$'s. Moreover, since multiplying all scalars by a common scalar in $S^1$ does not change the colopsidedness of columns, we may also assume that the scalars vector in Proposition \ref{prop:lopgeom} has the first entry equal to one. For $n=3$, we can then think of the elements of $\text{Colop}(3)$ as pairs of angles (see Figure \ref{fig:colopset}). Although we usually identify $S^1$ with $[0,2\pi)$, here we plotted several fundamental domains to better visualize the pattern and the toric nature of this object. Note further that the vertices of the hexagons are actually not in $\text{Colop}(3)$.
\begin{figure}[H]
  \centering
    \centerline{\includegraphics[width=0.3\textwidth]{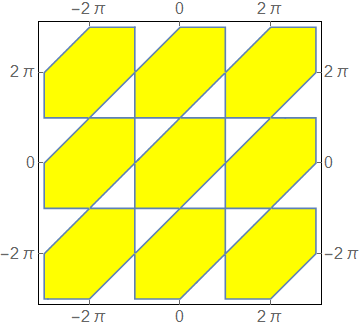} }
	\caption{Representation of $\text{Colop}(3)$.}
	\label{fig:colopset}
\end{figure}

Given a $3 \times m$ matrix $\Theta$, we can easily check if there is any $2 \times m$ submatrix of nonmaximal phase rank, since that is equivalent to having usual rank $1$, by Proposition \ref{prop:rank1phase}. So we can see if we are in the conditions of Proposition \ref{prop:lopgeom}, and checking if $\rank_{\text{phase}}(\Theta)=n$ is the same as checking if the geometric translations of the region in Figure \ref{fig:colopset} by the symmetric of the angles of the columns of $\Theta$ cover the entire space. Let $\Theta$ be the matrix from Example \ref{ex:3x3phase}. We have to consider the translations $\text{Colop}(3) - (\pi/2,-\pi/2)$, $\text{Colop}(3) - (\pi/4, 7 \pi/6)$ and $\text{Colop}(3) - (2\pi/3,4\pi/3)$. We get the regions shown in the left of Figure \ref{fig:colopcheck}, which do not cover the plane, so $\rank_{\text{phase}}(\Theta)<3$.
\begin{figure}[H]
  \centering
    \centerline{\includegraphics[width=0.3\textwidth]{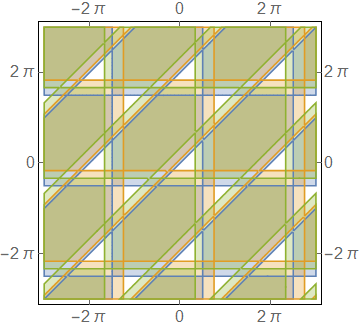} \hspace{2cm} \includegraphics[width=0.3\textwidth]{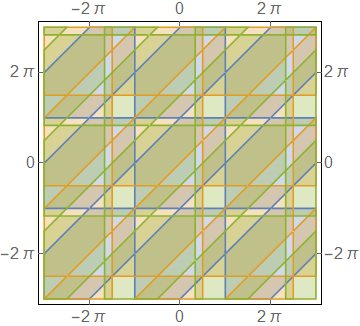}}
	\caption{Representation of the translations of $\text{Colop}(3)$ for two different matrices.}
	\label{fig:colopcheck}
\end{figure}
It is not hard to find examples where the translations do cover the entire space. For instance, for
$$ \Theta = \begin{bmatrix}
     1  & 1                    & 1 \\
     1  & i  & e^{i 2\frac{\pi}{3}} \\
    1   & -i & e^{i \frac{\pi}{6}}
\end{bmatrix},$$
we obtain the region shown in the right of Figure \ref{fig:colopcheck}, that indeed covers the whole space, so this matrix has phase rank $3$.
\end{example}

Note that, although in the previous example we saw that $\text{Colop}(3)$ occupies a large portion of $(S^1)^3$, this ceases to be true as $n$ increases. In fact, if we think of $\text{Colop}(n)$ as a subset of $[0,2 \pi )^n$, it is not hard to compute its volume.

\begin{lemma}
For $n \geq 2$, the volume of $\text{Colop}(n)$ seen as a subset of $[0,2\pi)^n$ is $2 n \pi^n$.
\end{lemma}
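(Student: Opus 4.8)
The plan is to reduce the geometric colopsidedness condition to a combinatorial statement about arcs on the circle, and then to evaluate the resulting measure by the classical ``$n$ points in a common semicircle'' argument. Writing each coordinate as $y_j = e^{i\theta_j}$ with $\theta_j \in [0,2\pi)$, I would first analyze when $0$ fails to lie in the relative interior of $\text{Conv}(\{e^{i\theta_1},\ldots,e^{i\theta_n}\})$. The claim I would establish is that, away from a set of measure zero, the sequence is colopsided exactly when all of the $\theta_j$ lie in a common closed arc of length $\pi$. The degenerate configurations to be excluded are those in which two of the angles coincide or are antipodal; these lie on finitely many hyperplanes of $[0,2\pi)^n$ and so contribute nothing to the volume. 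For all other configurations the convex hull is genuinely two dimensional, so its relative interior is its topological interior, and $0$ lies in that interior precisely when the points positively span $\mathbb{R}^2$, i.e.\ when they are \emph{not} all contained in one closed half-plane through the origin. Containment in such a half-plane is in turn equivalent to $\cos(\theta_j - \phi) \geq 0$ for all $j$ and some direction $\phi$, that is, to all angles lying in the closed arc $[\phi - \pi/2, \phi + \pi/2]$.

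With this reduction, the volume of $\text{Colop}(n)$ equals that of the set $C$ of tuples whose angles all lie in a common closed semicircle. To compute $\text{Vol}(C)$, I would introduce for each index $i$ the event
$$A_i = \{(\theta_1,\ldots,\theta_n) : \theta_j \in [\theta_i, \theta_i + \pi) \pmod{2\pi} \text{ for all } j\},$$
and argue that, off a measure-zero set, the $A_i$ form a partition of $C$. Exhaustiveness follows by looking at the $n$ cyclic gaps between consecutive angles: the points lie in a semicircle iff the largest gap is at least $\pi$, and then the point immediately counterclockwise of that gap serves as the unique valid index $i$. Disjointness follows since if $A_i$ and $A_j$ both held for distinct $i,j$, the counterclockwise distances from $\theta_i$ to $\theta_j$ and from $\theta_j$ to $\theta_i$ would both be in $(0,\pi)$, contradicting that they sum to $2\pi$.

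Finally I would evaluate each piece directly: fixing $\theta_i$, each of the remaining $n-1$ coordinates must fall in an arc of length $\pi$, so $\text{Vol}(A_i) = 2\pi \cdot \pi^{\,n-1} = 2\pi^n$. Summing over the $n$ disjoint events yields $\text{Vol}(\text{Colop}(n)) = n \cdot 2\pi^n = 2n\pi^n$, as desired.

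The main obstacle is the first step: carefully justifying the passage from ``$0 \notin$ relative interior of the convex hull'' to ``all angles in a closed semicircle,'' and confirming that every degenerate case (coincident or antipodal angles, or a largest gap exactly equal to $\pi$) is genuinely measure zero so that it may be discarded. Once the combinatorial reduction is in place, the partition argument and the volume integral are routine.
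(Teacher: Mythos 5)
Your proposal is correct and follows essentially the same route as the paper: reduce colopsidedness (up to measure zero) to all angles lying in a common closed semicircle, then decompose that set according to which angle is the leading point of the arc, giving $n$ pieces of volume $2\pi^n$ each. The paper's version is terser (it asserts the semicircle characterization and the $n$-fold count directly via an integral), while you additionally verify the disjointness of the pieces and the harmlessness of the degenerate antipodal/collinear configurations, but the underlying argument is the same.
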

\begin{proof}
A set $\Gamma$ of $n$ angles (defined up to additions of multiples of $2\pi$) is colopsided (seen as a subset of $(S^1)^n$) if they are all in the same halfspace, i.e., if there is some $\alpha \in \Gamma$ such that every other angle in $\Gamma$ is in $[\alpha,\alpha + \pi]$. There are $n$ choices for $\alpha$ so we get the volume
$$n \int_{0}^{2\pi} \int_{[\alpha,\alpha+\pi]^{n-1}} dV d\alpha = 2 n \pi^n.$$
\end{proof}

This immediately leads to a sufficient condition for nonsingularity, based on this lemma and Lemma \ref{lem:lopphase}.

\begin{proposition}[\cite{lee2000extremal}] \label{prop:nonsingularphaserec}
If $n \leq m < \frac{2^{n-1}}{n}$, then $\rank_{\text{phase}}(\Theta)<n$ for every $n \times m$ phase matrix $\Theta$.
\end{proposition}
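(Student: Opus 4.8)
The plan is to argue by a direct volume count on the torus $(S^1)^n$, identified with $[0,2\pi)^n$, relying only on Lemma \ref{lem:lopphase} and the volume computation immediately above. By the ``if'' direction of Lemma \ref{lem:lopphase}, to conclude $\rank_{\text{phase}}(\Theta)<n$ it suffices to exhibit a single scaling vector $y \in (S^1)^n$, with all entries nonzero (hence certainly not all zero), for which none of the $m$ columns of $\Theta$ becomes colopsided. I would make the obstruction sets explicit: for each column index $j$, the scalings that \emph{do} make column $j$ colopsided form exactly the set
$$B_j = \text{Colop}(n)/\Theta^j = \{ y \in (S^1)^n : (y_1 \Theta_{1j}, \ldots, y_n \Theta_{nj}) \text{ is colopsided}\},$$
so a good scaling is precisely any point of $(S^1)^n \setminus \bigcup_{j=1}^m B_j$, and it remains to show this complement is nonempty.

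The key observation is that, in angle coordinates, each $B_j$ is merely a translate of $\text{Colop}(n)$: viewing $\Theta^j$ through the usual identification as an angle vector, membership $y \in B_j$ amounts to $\text{Colop}(n)$ shifted by the negated angles of $\Theta^j$ on the torus. Since translation preserves Lebesgue measure, each $B_j$ has the same volume as $\text{Colop}(n)$, namely $2n\pi^n$ by the preceding lemma, regardless of $\Theta$. Subadditivity then yields
$$\text{vol}\Big(\bigcup_{j=1}^m B_j\Big) \leq \sum_{j=1}^m \text{vol}(B_j) = 2mn\pi^n.$$
Under the hypothesis $m < 2^{n-1}/n$ we get $2mn\pi^n < 2^n\pi^n = (2\pi)^n = \text{vol}((S^1)^n)$, so the union has volume strictly smaller than the ambient torus. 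Hence its complement has positive measure, in particular is nonempty, and any $y$ in it supplies, via Lemma \ref{lem:lopphase}, the required certificate that $\rank_{\text{phase}}(\Theta) < n$.

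I expect no genuinely hard step here; the two points needing (routine) care are that entrywise division by the fixed phase vector $\Theta^j$ really is a measure-preserving translation of the torus, and the elementary measure-theoretic fact that a finite family of sets whose volumes sum to strictly less than the ambient volume cannot cover it. I would also emphasize that this argument sidesteps the genericity hypothesis of Proposition \ref{prop:lopgeom}: we never need the submatrices obtained by erasing a row to have maximal phase rank, because Lemma \ref{lem:lopphase} already admits scalings with all entries in $S^1$, and the volume count produces exactly such a scaling.
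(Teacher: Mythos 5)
Your proposal is correct and follows essentially the same route as the paper: both arguments bound the volume of $\bigcup_{j=1}^m (\text{Colop}(n)/\Theta^j)$ by $m \cdot 2n\pi^n$ via subadditivity, compare it with $(2\pi)^n$, and invoke Lemma \ref{lem:lopphase} on a scaling outside the union. Your added remarks (translation-invariance of the torus measure, and that the genericity hypothesis of Proposition \ref{prop:lopgeom} is not needed) are accurate but only make explicit what the paper leaves implicit.
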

\begin{proof} Let $\Theta$ be an $n \times m$ phase matrix. Just note that the volume of $\cup_{j=1}^m (\text{Colop}(n)/\Theta^j)$ is at most $m$ times the volume of $\text{Colop}(n)$. If that is not enough to cover $(S^1)^n$, which has volume $(2\pi)^n$, that means there is a set of scalars that do not make any column of $\Theta$ colopsided. Therefore, by Lemma \ref{lem:lopphase}, the matrix is phase rank deficient. This happens if
$$m 2 n \pi^n < (2\pi)^n,$$
which gives us the intended result.
\end{proof}

This result was proven in \cite{lee2000extremal} using a similar but distinct argument that used a combinatorial gadget instead of this one based on volumes. A nice thing about the argument presented here is that it makes clear that the bound is somewhat conservative: in order to have a $n \times \frac{2 ^{n-1}}{n}$ phase matrix with full rank, we would need $\frac{2 ^{n-1}}{n}$ translated copies of $\text{Colop}(n)$ that covered the entire space without overlapping in any positive volume set. This is a very restrictive condition. In general, one would expect that much more copies than this will be needed to actually cover the space. This is, however, enough to take care of most square cases.

\begin{corollary}
For $n \geq 7$, no $n \times n$ phase matrix has phase rank $n$.
\end{corollary}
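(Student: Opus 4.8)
The plan is to obtain this as an immediate specialization of Proposition \ref{prop:nonsingularphaserec} to the square case. Setting $m = n$, the hypothesis $n \le m < \frac{2^{n-1}}{n}$ collapses: the first inequality $n \le n$ is automatic, and the second becomes $n < \frac{2^{n-1}}{n}$, i.e., $n^2 < 2^{n-1}$. So the whole corollary reduces to verifying that this single inequality holds for every integer $n \ge 7$, after which Proposition \ref{prop:nonsingularphaserec} guarantees $\rank_{\text{phase}}(\Theta) < n$ for every $n \times n$ phase matrix $\Theta$.

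To establish $n^2 < 2^{n-1}$ for $n \ge 7$, I would proceed by induction on $n$. The base case $n = 7$ is the direct computation $7^2 = 49 < 64 = 2^{6}$. For the inductive step, assuming $n^2 < 2^{n-1}$, it suffices to show $(n+1)^2 \le 2^{n}$; expanding $(n+1)^2 = n^2 + (2n+1)$ and invoking the inductive hypothesis, this follows provided $2n + 1 \le 2^{n-1}$, which is easily checked for all $n \ge 4$. Thus the repeated doubling of the right-hand side outstrips the polynomial growth on the left, and the inequality propagates upward from $n = 7$.

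There is essentially no obstacle here, as the statement is a clean consequence of the volume count underlying Proposition \ref{prop:nonsingularphaserec}; the only point demanding care is the sharpness of the threshold. The inequality $n^2 < 2^{n-1}$ fails precisely at $n = 6$, where $6^2 = 36 > 32 = 2^{5}$, which is exactly why the bound starts at $n = 7$ and cannot be pushed lower by this argument. This is consistent with the earlier observation that the counting bound in Proposition \ref{prop:nonsingularphaserec} is conservative, so that the remaining small square cases $n \le 6$ are genuinely not settled by volume considerations alone and would require separate treatment.
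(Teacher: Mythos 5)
Your proposal is correct and matches the paper's proof, which likewise just observes that $\frac{2^{n-1}}{n} > n$ for $n \geq 7$ and invokes Proposition \ref{prop:nonsingularphaserec}; you merely spell out the induction that the paper leaves implicit. (One harmless slip: $2n+1 \leq 2^{n-1}$ first holds at $n=5$, not $n=4$, but this is irrelevant since your induction starts at $n=7$.)
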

\begin{proof}
Just note that for $n \geq 7$, $\frac{2^{n-1}}{n} > n$.
\end{proof}

In \cite{lee2000extremal} the authors were able to slightly modify this argument to prove that there are also no $6 \times 6$ phase matrices with phase rank $6$, while in \cite{li2004non} a very long and technical proof was provided to show that there are also no $5 \times 5$ phase rank nonsingular matrices. For $n \leq 4$ such matrices exist and we will come back to these in the next section.

The rectangular case is still not fully characterized. We know from Lemma \ref{lem:lopsign} on sign rank that the $n \times 2^{n-1}$ matrix whose columns are all the distinct $\pm 1$ vectors starting with a one has sign rank (and thus phase rank) $n$, and we just saw that for $m < \frac{2^{n-1}}{n}$ these do not exist.

\begin{question}
What is the smallest $m$ for which there is a $n \times m$ phase matrix with phase rank $n$?
Can one at least find better bounds than $\frac{2^{n-1}}{n} \leq m \leq 2^{n-1}$?
\end{question}

As explained before, this can be seen as the problem of the minimal number of geometric translations of a certain set that are needed to cover $(S^1)^n$, a type of geometrical problem that tends to be hard.

\section{Small square matrices of maximal phase rank}\label{sec:smallsquare}

We have seen that for $n \geq 5$ every $n \times n$ phase matrix has phase rank less than $n$. So the question of determining if a square matrix is phase nonsingular is only potentially interesting for $n=2,3,4$.

For $n=2$, due to Proposition \ref{prop:rank1phase}, the phase rank coincides with the usual rank, so the question is trivial. For $n=3$ and $n=4$, Lemma \ref{lem:lopphase} gives a potential way to find if the rank is maximal, but it involves either finding a scaling with the required properties or proving that one does not exist, and there is no direct way of doing that. So one would like a simple, preferably semialgebraic, way of describing the sets of $n \times n$ phase  matrices of  phase rank less than $n$, for $n=3$ and $4$.

A sufficient certificate for maximal phase rank is given by the colopsided criterion in coamoeba theory, as stated in Lemma \ref{lem:colop_coamoeba}. This method is already proposed in \cite{mcdonald1997ray}.

\begin{definition}
We say that an $n\times n$ phase matrix $\Theta$ is colopsided if the $n \times n$ determinant polynomial is colopsided at $\Theta$.
\end{definition}

By Lemma \ref{lem:colop_coamoeba}, if $\Theta$ is colopsided, $\Theta$ does not belong to the coamoeba of the determinant, i.e., it has phase rank $n$. Let us denote by $\overrightarrow{\det(\Theta)}$ the vector of the monomials of the $n \times n$ determinant evaluated at the phase matrix $\Theta$. We are saying that if $0$ is not in the relative interior of the convex hull of $\overrightarrow{\det(\Theta)}$ then $\Theta$ has phase rank $n$.

\begin{example}\label{ex:colops}
Let $$\Theta_1=\begin{bmatrix}
1 & 1 & 1\\
1 & e^{i\frac{3\pi}{4}}  &   e^{-i\frac{\pi}{2}}\\
1 & e^{-i\frac{\pi}{2}} &  e^{i\frac{3\pi}{4}}
\end{bmatrix}, \, \Theta_2=\begin{bmatrix}
1 & 1 & 1\\
1 & e^{i\frac{\pi}{2}}  &   e^{i\frac{\pi}{2}}\\
1 & e^{i\frac{\pi}{2}} &  -1
\end{bmatrix} \text{ and }\Theta_3=\begin{bmatrix}
1 & 1 & 1\\
1 & e^{i\frac{\pi}{2}}  &   e^{i\frac{\pi}{3}}\\
1 & e^{i\frac{\pi}{3}} &  e^{i\frac{\pi}{2}}
\end{bmatrix}.$$
We have 
\begin{align*}
\overrightarrow{\det(\Theta_1)} & =  (e^{i\frac{3\pi}{2}}, e^{-i\frac{\pi}{2}},e^{-i\frac{\pi}{2}},-e^{i\frac{3\pi}{4}},-e^{i\frac{3\pi}{4}},1),\\ 
\overrightarrow{\det(\Theta_2)} & =  (-e^{i\frac{\pi}{2}}, e^{i\frac{\pi}{2}},e^{i\frac{\pi}{2}},-e^{i\frac{\pi}{2}},1,1),\\
\overrightarrow{\det(\Theta_3)} & =  (-1, e^{i\frac{\pi}{3}}, e^{i\frac{\pi}{3}}, -e^{i\frac{\pi}{2}}, -e^{i\frac{\pi}{2}},-e^{i\frac{2\pi}{3}},-e^{i\frac{\pi}{2}}).
\end{align*}  
Both $\Theta_1$ and $\Theta_2$ are colopsided, while $\Theta_3$ is not (see Figure \ref{fig:colops}). $\text{Conv}(\overrightarrow{\det(\Theta_1)})$ does not contain $0$. $\text{Conv}(\overrightarrow{\det(\Theta_2)})$ does contain it, but its relative interior does not. We can immediately conclude that both $\Theta_1$ and $\Theta_2$ have phase rank $3$.

\begin{figure}[H]
  \centering
    \centerline{\includegraphics[width=0.24\textwidth]{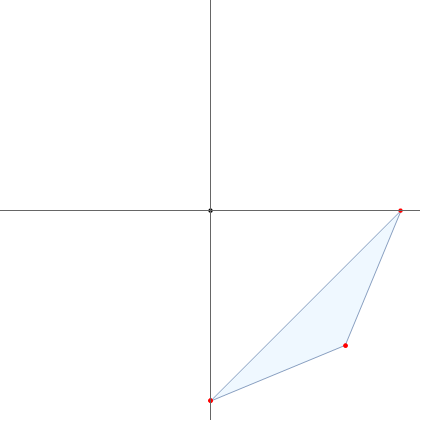} \hspace{2cm} \includegraphics[width=0.24\textwidth]{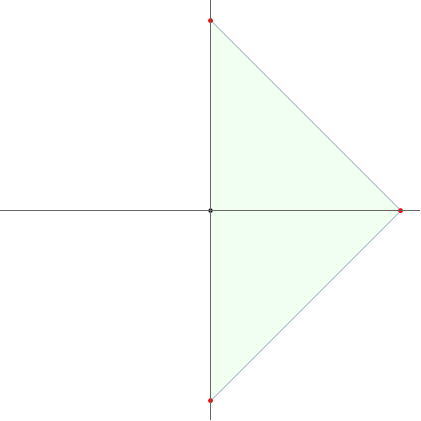} \hspace{2cm} \includegraphics[width=0.24\textwidth]{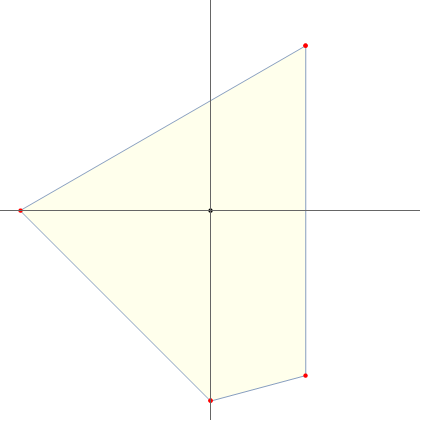}}
	\caption{Convex hulls of $\protect\overrightarrow{\det(\Theta_1)}, \protect\overrightarrow{\det(\Theta_2)}, \protect\overrightarrow{\det(\Theta_3)}$, in this order, where $\Theta_1$, $\Theta_2$ and $\Theta_3$ are the matrices from Example \ref{ex:colops}.}
	\label{fig:colops}
\end{figure}
\end{example}

The colopsidedness criterion was proposed in the ray nonsingularity literature, and was noted not to be necessary in the $4 \times 4$ case. In fact, for
$$\Theta=\begin{bmatrix}
1 & 1 & 1 & i  \\
1 & 1 &-1 & 1  \\
1 & -1 & 1 & 1 \\
-1 & 1 & 1 & 1 \\
\end{bmatrix},$$
$\overrightarrow{\det(\Theta)}$ contains $1,-1,i$ and $-i$,
so $0$ is in the interior of its convex hull. Moreover, in \cite{lee2000extremal} it is shown that this matrix has phase rank $4$.

The question of necessity of the colopsided criterion for phase nonsingularity is not addressed in the ray nonsingularity literature. In \cite{li2004non}, for example, in order to study the $5 \times 5$ case, the authors derive an extensive and complicated description for the $3\times3$ case, without any reference to colopsidedness. It turns out that colopsidedness is a necessary and sufficient condition for phase nonsingularity of $3 \times 3$ matrices.

\begin{theorem}\label{thm:phasecolop3x3}
Given a $3 \times 3$ phase matrix $\Theta$, $\rank_{\text{phase}}{\Theta} < 3$ if and only if the origin is in the relative interior of the convex hull of  $\overrightarrow{\det(\Theta)}$.
\end{theorem}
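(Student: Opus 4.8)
My plan is to dispatch the forward implication by the general colopsidedness machinery and to concentrate all effort on the converse, which I would reduce to a purely real feasibility problem. For the forward direction, if $\rank_{\text{phase}}(\Theta)<3$ then $\Theta$ lies in the coamoeba of the determinantal variety $Y_2^{3,3}$, whose ideal is generated by $\det$; by Lemma \ref{lem:colop_coamoeba} no element of that ideal, and in particular $\det$ itself, is colopsided at $\Theta$, which is exactly the assertion that $0$ lies in the relative interior of $\text{Conv}(\overrightarrow{\det(\Theta)})$.

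For the converse, write the six signed monomials as $u_\sigma\in S^1$, $\sigma\in S_3$, so that $\overrightarrow{\det(\Theta)}=(u_\sigma)_\sigma$. For a candidate $M_{ij}=r_{ij}\Theta_{ij}$ with $r_{ij}>0$ one has $\det(M)=\sum_\sigma t_\sigma u_\sigma$ with $t_\sigma=\prod_i r_{i\sigma(i)}>0$. First I would pin down which positive tuples $(t_\sigma)$ arise: after taking logarithms, $(\log r_{ij})\mapsto(\log t_\sigma)$ is the linear map $\mathbb{R}^9\to\mathbb{R}^6$ whose rows are the flattened $3\times 3$ permutation matrices. As these span the $5$-dimensional space of matrices with equal row and column sums, the image is exactly the hyperplane $\sum_{\sigma\text{ even}}\log t_\sigma=\sum_{\sigma\text{ odd}}\log t_\sigma$; hence the attainable tuples are precisely those on the surface $P=\{t>0:\prod_{\sigma\text{ even}}t_\sigma=\prod_{\sigma\text{ odd}}t_\sigma\}$. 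Therefore $\rank_{\text{phase}}(\Theta)<3$ is equivalent to the existence of $t\in P$ with $\sum_\sigma t_\sigma u_\sigma=0$.

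By the paper's characterisation of the relative interior, the hypothesis says precisely that the convex cone $\mathcal{S}=\{t\in\mathbb{R}_{>0}^6:\sum_\sigma t_\sigma u_\sigma=0\}$ is nonempty, and my goal becomes $\mathcal{S}\cap P\neq\emptyset$. Since $\mathcal{S}$ is convex, hence connected, I would study the scale-invariant continuous function $g(t)=\sum_\sigma\varepsilon_\sigma\log t_\sigma$, where $\varepsilon_\sigma=\pm1$ is the parity of $\sigma$, so that $\mathcal{S}\cap P=\{g=0\}$; as $g(\mathcal{S})$ is an interval it suffices to find points with $g>0$ and with $g<0$. Given a three-element set $T$ with $0\in\text{relint Conv}(u_\sigma:\sigma\in T)$ (a surrounding triple), the associated extreme ray $\rho$ of $\mathcal{S}$ is supported on $T$; along the segment $(1-\epsilon)\rho+\epsilon\mu$ toward any interior $\mu\in\mathcal{S}$, the three coordinates off $T$ vanish at a common rate as $\epsilon\to0^+$, so $g$ diverges to $+\infty$ when $\sum_{\sigma\notin T}\varepsilon_\sigma<0$ and to $-\infty$ otherwise. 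As $|T|=3$ this sign is never zero, being $+\infty$ exactly when $T$ has an even majority. Thus it is enough to exhibit one surrounding triple of each majority, and the intermediate value theorem produces $t\in\mathcal{S}$ with $g=0$.

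The main obstacle is this last combinatorial--geometric step: guaranteeing that surrounding triples of \emph{both} parities exist. A double count shows that at least two of the six points can be individually removed while keeping the origin surrounded, but not that both parities are represented, and I must also rule out degenerate circular arrangements in which the origin is captured only by a four-point circuit (every sub-triple failing), where the clean parity dichotomy above breaks down. Here I expect the algebraic structure of the determinant to be decisive: the monomial phases are far from arbitrary, satisfying $\prod_{\sigma\text{ even}}u_\sigma=-\prod_{\sigma\text{ odd}}u_\sigma$ and, after using row and column phase scalings to normalise the first row and column of $\Theta$ to $1$ so that only four free phases remain, two further relations among their arguments. These constraints should eliminate the degenerate arrangements and force the required parity balance, leaving a finite case analysis on the cyclic order of the six phases. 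An equivalent and perhaps more transparent bookkeeping is the toric covering picture of Proposition \ref{prop:lopgeom}: in the normalised coordinates the claim becomes that three translates of $\text{Colop}(3)$ fail to cover the torus, and one checks directly that the uncovered region is nonempty exactly when $0\in\text{relint Conv}(\overrightarrow{\det(\Theta)})$.
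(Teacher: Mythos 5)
Your reduction of the problem is sound and matches the paper's: the forward direction via Lemma \ref{lem:colop_coamoeba}, and the identification of the attainable coefficient tuples as the positive solutions of $\prod_{\sigma\text{ even}}t_\sigma=\prod_{\sigma\text{ odd}}t_\sigma$ is exactly the content of the paper's Lemma \ref{lem:aux_colop} (same rank computation on the matrix of flattened permutation matrices, whose left kernel is spanned by $(1,1,1,-1,-1,-1)$). The intermediate-value instinct for hitting the surface $P$ inside the connected cone $\mathcal{S}$ is also the right one. But the step you yourself flag as the ``main obstacle'' is a genuine gap, and moreover the statement you would need --- that surrounding \emph{triples} of both parity types exist --- is stronger than what is true: as you note, when the origin is captured only by a four-point circuit (e.g.\ monomial phases falling into an arrangement like $1,i,-1,-i$), no sub-triple surrounds the origin at all, so your divergence argument for $g$ has no extreme rays to run along, and the clean parity dichotomy ($|T^c|=3$ forces $\sum_{\sigma\notin T}\varepsilon_\sigma\neq 0$) is lost once $T$ must be allowed to have four or five elements. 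Appealing to the special algebraic relations among the six phases to exclude these configurations is plausible but is precisely the part that is not carried out, and the finite case analysis you defer to is where all the difficulty lives.

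The paper closes this gap with a weaker and one-sided convexity lemma that suffices: given $3$ red and $3$ blue points surrounding the origin, one can always delete \emph{some} red points (keeping all blue ones, not necessarily reducing to a triple) while keeping the origin in the relative interior; this is proved by elementary arguments plus a theorem of Steinitz. Then, after normalizing so that $a_1a_2a_3>a_4a_5a_6$ for an interior witness $a\in\mathcal{S}$, one takes $b\in\mathcal{S}$ supported off at least one of the first three coordinates and observes that along $c^\lambda=a+\lambda b$ the quantity $c^\lambda_1c^\lambda_2c^\lambda_3-c^\lambda_4c^\lambda_5c^\lambda_6$ is a cubic in $\lambda$ that is positive at $\lambda=0$ and has negative leading coefficient $-b_4b_5b_6$, so it vanishes for some $\lambda>0$. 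This replaces your two-sided divergence of $g$ by a single deformation whose existence is guaranteed in every configuration, degenerate or not. If you want to complete your argument along your own lines, the cleanest fix is to prove the red/blue deletion lemma and then run the IVT on the product difference rather than on $\log$-coordinates of extreme rays.
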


In terms of coamoeba theory, we are saying that the coamoeba of the variety of singular $3\times 3$ matrices is characterized by the noncolopsidedness of the determinant. Coamoebas of simple polynomials are completely characterized by the colopsidedness criterion (see \cite{forsgaard2012hypersurface}). This is a new nontrivial example of another hypersurface with the same property, since the $3 \times 3$ determinant is not simple.

In what follows we will present a proof of Theorem \ref{thm:phasecolop3x3}, based only on simple results from linear algebra and convex geometry. We will consider throughout a general $3 \times 3$ phase matrix of the following form:
$$\Theta=\begin{bmatrix}
e^{i \phi_1} & e^{i \phi_2} & e^{i \phi_3}\\
e^{i \phi_4} & e^{i \phi_5} & e^{i \phi_6} \\
e^{i \phi_7} & e^{i \phi_8}  & e^{i \phi_9}
\end{bmatrix}.$$
For this matrix, $\overrightarrow{\det(\Theta)}$ equals \begin{small}
$$\big(e^{i(\phi_1 + \phi_5 + \phi_9)}, e^{i(\phi_2 + \phi_6 + \phi_7)}, e^{i(\phi_3 + \phi_4 + \phi_8)}, -e^{i(\phi_1 + \phi_6 + \phi_8)}, -e^{i(\phi_2 + \phi_4 + \phi_9)}, -e^{i(\phi_3 + \phi_5 + \phi_7)}\big).$$
\end{small}
We will prove the equivalence between noncolopsidedness and nonmaximal phase rank for the $3\times 3$ case in two steps. First, we show that nonmaximal phase rank is the same as noncolopsidedness with one additional condition and, then, that this extra restriction can be removed.

\begin{lemma}\label{lem:aux_colop}
Let $\Theta$ be a $3\times 3$ phase matrix. Then, $\rank_{\text{phase}}(\Theta)<3$ if and only if there exists a coefficient vector $c \in \mathbb{R}^6_{++}$ such that $\overrightarrow{\det(\Theta)}\cdot c =0$ and  $c_1 c_2 c_3 = c_4 c_5 c_6.$
\end{lemma}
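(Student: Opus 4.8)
The plan is to relate the phase rank of $\Theta$ directly to the existence of a rank-deficient matrix with the prescribed phases, and then to translate that algebraic condition into the two convex-geometric conditions stated in the lemma. Recall that $\rank_{\text{phase}}(\Theta)<3$ means there is a matrix $M = D \ast \Theta$, with $D$ a matrix of positive reals, such that $\det(M)=0$. The key observation is that the determinant of $M$ is a signed sum of six monomials, each of which is a product of three positive moduli times the corresponding monomial of $\overrightarrow{\det(\Theta)}$. So $\det(M)=0$ says precisely that some strictly positive combination of the six entries of $\overrightarrow{\det(\Theta)}$ vanishes, which is the condition $\overrightarrow{\det(\Theta)}\cdot c = 0$ for some $c \in \RR^6_{++}$.

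The heart of the matter is understanding which positive coefficient vectors $c$ are actually realizable as coming from a genuine matrix $M=D\ast\Theta$. First I would write $c_k$ as the product of the three positive moduli appearing in the $k$-th term of the Leibniz expansion. Writing $D_{ij}=d_{ij}>0$, the six coefficients are
\begin{align*}
c_1 = d_{11}d_{22}d_{33}, \quad & c_2 = d_{12}d_{23}d_{31}, \quad c_3 = d_{13}d_{21}d_{32},\\
c_4 = d_{11}d_{23}d_{32}, \quad & c_5 = d_{12}d_{21}d_{33}, \quad c_6 = d_{13}d_{22}d_{31}.
\end{align*}
A direct computation then shows $c_1c_2c_3 = c_4c_5c_6$, since both sides equal $\prod_{i,j} d_{ij}$. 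This gives the forward direction: any realizable $c$ satisfies the product constraint, and so nonmaximal phase rank forces both $\overrightarrow{\det(\Theta)}\cdot c=0$ and $c_1c_2c_3=c_4c_5c_6$ for some $c\in\RR^6_{++}$.

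For the converse, I would start from an arbitrary $c\in\RR^6_{++}$ satisfying $\overrightarrow{\det(\Theta)}\cdot c = 0$ and $c_1c_2c_3 = c_4c_5c_6$, and reconstruct positive moduli $d_{ij}$ realizing these six products. This is the crux of the argument: I must invert the monomial map $d \mapsto c$ above. Passing to logarithms, $u_{ij}=\log d_{ij}$ and $\gamma_k = \log c_k$, the six equations become a linear system whose matrix is the $6\times 9$ incidence matrix of the permutation-monomial exponents. The image of this linear map is exactly the set of $\gamma\in\RR^6$ satisfying $\gamma_1+\gamma_2+\gamma_3=\gamma_4+\gamma_5+\gamma_6$ (the single linear relation among the six permutation monomials on nine variables), which is precisely the logarithmic form of the constraint $c_1c_2c_3 = c_4c_5c_6$. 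Hence the product constraint is exactly what is needed for solvability, and any real solution $u$ exponentiates to positive $d_{ij}$, giving the matrix $M=D\ast\Theta$ with $\det(M)=\overrightarrow{\det(\Theta)}\cdot c = 0$, so $\rank_{\text{phase}}(\Theta)<3$.

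The main obstacle I anticipate is verifying cleanly that the only linear relation among the six monomial exponent vectors is the balancing relation $\gamma_1+\gamma_2+\gamma_3=\gamma_4+\gamma_5+\gamma_6$; equivalently, that the rank of the $6\times 9$ exponent matrix is exactly $5$, so its image is the hyperplane cut out by that single relation. This is a finite linear-algebra check on an explicit matrix and is routine, but it is the step that makes the product constraint both necessary and sufficient for realizability; everything else is bookkeeping with the Leibniz expansion and the logarithm. I would carry out this rank computation explicitly (or by exhibiting the balancing covector as spanning the left kernel) to close the converse.
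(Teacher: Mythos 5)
Your proposal is correct and follows essentially the same route as the paper: the forward direction by reading off the six permutation-monomial coefficients and noting both triple products equal $\prod_{i,j} d_{ij}$, and the converse by taking logarithms and checking that the left kernel of the $6\times 9$ exponent matrix is spanned by the balancing covector $(1,1,1,-1,-1,-1)$, so the product constraint is exactly the solvability condition. The rank-$5$ verification you flag as the remaining step is precisely the explicit computation the paper carries out.
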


\begin{proof}
Let $\Theta$ be as above and suppose $\rank_{\text{phase}}(\Theta) < 3$. One can find a real positive matrix  $M$ such that $\det(M*\Theta)=0$, where $*$ represents the Hadamard product.
But $\det(M*\Theta)$ can be written as the dot product 
$\overrightarrow{\det(\Theta)}\cdot A$
where $A$ is the vector \begin{small}$$(M_{11}M_{22}M_{33},M_{12}M_{23}M_{31},M_{13}M_{21}M_{32},M_{11}M_{23}M_{32},M_{12}M_{21}M_{33},M_{13}M_{22}M_{31})$$\end{small}
whose entries satisfy the intended relations.

Conversely, suppose there exist positive coefficients $c_i,i=1,\ldots,6,$ satisfying $\overrightarrow{\det(\Theta)}\cdot c =0$ and  $c_1 c_2 c_3 = c_4 c_5 c_6.$ According to the reasoning above, if we can find a positive matrix $M$ for which $\det(M*\Theta)=\overrightarrow{\det(\Theta)}\cdot c =0$,  $\Theta$ will have nonmaximal phase rank. Finding this $M$ is equivalent to write $c$ as a vector with the form of $A$ above, which is equivalent to solving the linear system
\begin{align*}
M'_{11} + M'_{22} + M'_{23} = c'_1, \, M'_{12} + M'_{23} + M'_{31} = c'_2, \, M'_{13} + M'_{21} + M'_{32} = c'_3 \\
M'_{11} + M'_{23} + M'_{32} = c'_4, \, M'_{12} + M'_{21} + M'_{33} = c'_5, \, M'_{13} + M'_{22} + M'_{31} = c_6',
\end{align*} where $c'_i = \log c_i$ and  $M'_{ij} = \log M_{ij}.$
This is solvable for $M'_{ij}$ if and only if $c'$ is in $\mathcal{C}(B)$, the column space of
$$B=\begin{bmatrix}
1 & 0 & 0 & 0 & 1 & 0 & 0 & 0 & 1\\
0 & 1 & 0 & 0 & 0 & 1 & 1 & 0 & 0\\
0 & 0 & 1 & 1 & 0 & 0 & 0 & 1 & 0\\
1 & 0 & 0 & 0 & 0 & 1 & 0 & 1 & 0\\
0 & 1 & 0 & 1 & 0 & 0 & 0 & 0 & 1\\
0 & 0 & 1 & 0 & 1 & 0 & 1 & 0 & 0
\end{bmatrix},$$
or, equivalently, $c'$ is orthogonal to $\text{Null}(B^T)$, the null space of the transpose of $B$. Since $\text{Null}(B^T)$ is spanned by $(
1 1  1  -1  -1  -1)^\intercal$, $c'$ is in $\mathcal{C}(B)$ if and only if $c'_1 + c'_2 + c'_3 = c'_4 + c'_5 + c'_6$, i.e., $c_1 c_2 c_3 = c_4 c_5 c_6$, which is guaranteed by our hypothesis.
\end{proof}

To get rid of this extra condition on the coefficients we will have to do some extra work. The following lemma is a simple fact in discrete convex geometry and it will be useful later.

\begin{lemma}\label{lem:redbluepoints}
Given $3$ blue points and $3$ red points in $\RR^2 \setminus \{(0,0)\}$ such that the origin is in the relative interior of their convex hull, there is a proper subset of them containing all the blue points that still has the origin in the relative interior of its convex hull (and similarly for the red points).
\end{lemma}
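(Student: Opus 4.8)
The plan is to reformulate the statement entirely in terms of strictly positive combinations and to avoid reasoning directly about the geometry of convex hulls. Recall (as noted in Section~\ref{sec:assocon}) that for a finite set $S\subseteq\RR^2$ one has $0\in\mathrm{relint}\,\mathrm{Conv}(S)$ if and only if $0$ can be written as a convex combination of the points of $S$ with \emph{all} coefficients strictly positive. Applying the ``only if'' direction to the hypothesis, I would start from strictly positive reals $\beta_1,\beta_2,\beta_3,\rho_1,\rho_2,\rho_3>0$ with
\[
\beta_1 b_1+\beta_2 b_2+\beta_3 b_3+\rho_1 r_1+\rho_2 r_2+\rho_3 r_3=0 .
\]
The goal then becomes purely algebraic: produce a new such relation that keeps all three blue coefficients strictly positive but makes at least one red coefficient vanish. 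Dropping the corresponding point yields a proper subset still containing every blue point, and the ``if'' direction of the characterization turns the surviving strictly positive relation back into the desired relative-interior membership.

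The key reduction is a Carath\'eodory-type step applied \emph{only} to the red points, so that the blue coefficients are never disturbed. Since $r_1,r_2,r_3$ are three vectors in $\RR^2$ they are linearly dependent, so there is a nonzero $(\mu_1,\mu_2,\mu_3)$ with $\mu_1 r_1+\mu_2 r_2+\mu_3 r_3=0$; after possibly negating, I may assume some $\mu_j>0$. Setting $t=\min\{\rho_j/\mu_j:\mu_j>0\}>0$, attained at an index $j_0$, the coefficients $\rho_j':=\rho_j-t\mu_j$ are all nonnegative, vanish at $j_0$, and satisfy $\sum_j\rho_j' r_j=\sum_j\rho_j r_j$ because we have subtracted a multiple of the null relation. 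Substituting this back leaves the blue part untouched and gives
\[
\beta_1 b_1+\beta_2 b_2+\beta_3 b_3+\sum_{j\neq j_0}\rho_j' r_j=0 ,
\]
a relation with strictly positive blue coefficients, nonnegative red coefficients, and no contribution from $r_{j_0}$.

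To finish, I would take $S'$ to be the three blue points together with those $r_j$, $j\neq j_0$, whose coefficient $\rho_j'$ is strictly positive; discarding the zero terms, the last display exhibits $0$ as a strictly positive combination of exactly the points of $S'$, so $0\in\mathrm{relint}\,\mathrm{Conv}(S')$. Since $r_{j_0}\notin S'$, the set $S'$ is a proper subset containing all blue points, as required, and the red version follows identically after exchanging the roles of the two colours. I expect the only real subtlety to be conceptual rather than computational: one must resist handling the degenerate (collinear) configurations geometrically and instead notice that the positive-combination characterization dispatches every case uniformly, and one must apply the linear-dependence relation to the red vectors \emph{alone} — applying it to all six points could force a blue coefficient to zero and destroy the requirement that every blue point be retained.
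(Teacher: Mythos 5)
Your proof is correct, and it takes a genuinely different route from the one in the paper. The paper argues geometrically: it invokes a theorem of Steinitz to show that if no triangle on the six points contains the origin in its interior then the points lie on one or two lines through the origin, and then runs a case analysis (triangle with a blue vertex, all-red triangle, collinear configurations). You instead work entirely with the positive-combination characterization of $0\in\mathrm{relint}\,\mathrm{Conv}(S)$ and perform a Carath\'eodory-type reduction on the red coefficients alone: since $r_1,r_2,r_3$ are linearly dependent in $\RR^2$, subtracting the right multiple of a null relation $\sum_j\mu_j r_j=0$ drives one red coefficient to zero while keeping the others nonnegative and leaving the blue coefficients untouched. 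Each step checks out: the sign bookkeeping for $t=\min\{\rho_j/\mu_j:\mu_j>0\}$ is right (indices with $\mu_j\leq 0$ only gain weight), the degenerate possibility that all three red coefficients vanish simultaneously still yields a valid proper subset, and the fact that the relative interior of the convex hull of a finite set is exactly the set of its strictly positive convex combinations is standard (and is the characterization the paper itself records after defining colopsidedness). Your argument is shorter, needs no external geometric theorem and no case analysis, and actually proves more: it works for any number of blue points and any three (or more) red points in the plane, whereas the paper's proof is tailored to the $3+3$ configuration. The trade-off is that the paper's proof gives a concrete geometric picture of which point can be discarded, which yours does not, but for the purposes of Theorem~\ref{thm:phasecolop3x3} only the existence statement is needed.
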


\begin{proof}
It is sufficient to prove that we can always erase some red points while keeping the origin in the relative interior, as the statement is clearly symmetric with respect to the colors.

Suppose that there is no triangle with vertices on the six given points containing the origin in its interior. Then there are two possibilities: either all the points are in a line through the origin, or all the points are in exactly two lines through the origin.
This is a consequence of a theorem of Steinitz \cite{BedingtkonvergenteReihenundkonvexeSysteme} (see Result B in \cite{bonnice1966interior}).

If all are in one line, then for each color there must be two red points in the same side of the origin, so we can drop one without losing the origin in the relative interior of the convex hull. If all are in exactly two lines then we must have points in each side of the origin on each line, and if we have more than one point on a side then we can remove one of them. If no red point is redundant, that means that all the blue points are in the same side of the origin in one of the lines, which means we can drop the line with only red points and we will still have the origin on the relative interior.

So we can assume that there is a triangle containing the origin in its interior. If there is such a triangle using a blue point, then we are done, as the set attained by adding any missing blue point to the vertices of the triangle would have the required properties.

The only remaining case would be if there is a triangle that uses only red points. In that case, if we take any of the blue points and consider the three triangles it can define with the red points, the only way that none of them contains the origin in its interior is if the origin is in the interior of the segment between the blue and a red point. If all the blue points are opposite to different red points, then the triangle of blue points would contain the origin, if not, then the set of blue points together with their opposite red points satisfies the intended property.
\end{proof}

We are now ready to prove Theorem \ref{thm:phasecolop3x3} by using this convex geometry fact to remove the extraneous condition on Lemma \ref{lem:aux_colop}.

\begin{proof}[Proof of Theorem \ref{thm:phasecolop3x3}]
If the origin is not in the relative interior of the convex hull of $\overrightarrow{\det(\Theta)}$, this means the determinant is colopsided at $\Theta$, which implies, by Lemma \ref{lem:colop_coamoeba}, that $\rank_{\text{phase}}(\Theta)=3$. So one of the implications is easy.

We will show that if $\Theta$ is not colopsided then there exists a coefficient vector $c \in \mathbb{R}^6_{++}$ such that $\overrightarrow{\det(\Theta)}\cdot c =0$ and  $c_1 c_2 c_3 = c_4 c_5 c_6.$ By Lemma \ref{lem:aux_colop}, this will imply that $\Theta$ has nonmaximal phase rank, giving us the remaining implication.	

Suppose $\Theta$ is not colopsided. Then, there exists $a \in \mathbb{R}^6_{++}$ such that
$\overrightarrow{\det(\Theta)}\cdot a =0.$
If $a_1 a_2 a_3 = a_4 a_5 a_6$, we are done. Thus, we either have $a_1 a_2 a_3 > a_4 a_5 a_6$ or $a_1 a_2 a_3 < a_4 a_5 a_6$. Without loss of generality we may assume the first, since switching two rows in the matrix will switch the sets $\{a_1,a_2,a_3\}$ and $\{a_4,a_5,a_6\}$.

Note that the entries of $\overrightarrow{\det(\Theta)}$ can be thought of as points in $\RR^2$ where the first three are red and the last three are blue, and their convex hull contains the origin in its relative interior. So, by Lemma \ref{lem:redbluepoints}, the origin is still in the relative interior of the convex hull if we drop some of the $a_i$'s, $i=1,2,3.$
This means that there exists $b \in \mathbb{R}^6_{+}$ for which some of the first three coordinates are zero but none of the last three coordinates is, and such that  $\overrightarrow{\det(\Theta)}\cdot b =0$.

Now note that every $6$-uple of the form $c^{\lambda}=a+\lambda b$, $\lambda \geq 0$, will satisfy $c^{\lambda} \in \mathbb{R}^6_{++}$ and $\overrightarrow{\det(\Theta)}\cdot c^{\lambda} =0$. Furthermore,
$$c^{\lambda}_1c^{\lambda}_2c^{\lambda}_3-c^{\lambda}_4c^{\lambda}_5c^{\lambda}_6=(a_1 + b_1 \lambda) (a_2 + b_2 \lambda) (a_3 + b_3 \lambda) - (a_4 + b_4 \lambda) (a_5 + b_5 \lambda) (a_6 + b_6 \lambda)$$
equals $a_1 a_2 a_3 - a_4 a_5 a_6 >0 $ for $\lambda = 0$ but goes to $-\infty$ when $\lambda$ grows to $+\infty$, since it is a cubic polynomial on $\lambda$ with the coefficient of $\lambda^3$ being $-b_4b_5b_6$. Hence, for some $\lambda$ we have that $c^{\lambda}_1c^{\lambda}_2c^{\lambda}_3-c^{\lambda}_4c^{\lambda}_5c^{\lambda}_6=0$ and there is a vector $c$ with the desired properties.
\end{proof}

\begin{example}
We saw in Example \ref{ex:colops} that the matrix $$\Theta_3=\begin{bmatrix}
1 & 1 & 1\\
1 & e^{i\frac{\pi}{2}}  &   e^{i\frac{\pi}{3}}\\
1 & e^{i\frac{\pi}{3}} &  e^{i\frac{\pi}{2}}
\end{bmatrix}$$
has a noncolopsided determinant. Therefore, $\rank_{\text{phase}}(\Theta_3)<3$.
\end{example}

\begin{example}
The characterization of nonsingular $3 \times 3$ phase matrices given by Theorem \ref{thm:phasecolop3x3} is very easy to check. In particular, it can be used to visualize  slices of the coamoeba associated to the $3\times3$ determinant, i.e., of the set of phase singular $3 \times 3$ phase matrices. In Figure \ref{fig:colop_nonmax} are shown, on the left, the set of triples $(t_1,t_2,t_3)$ for which $$\begin{bmatrix}
	1 & 1 & 1\\
	1 & e^{it_1} & e^{it_2}\\
	1 & e^{it_3} & e^{i\frac{\pi}{3}}
	\protect\end{bmatrix}$$  has nonmaximal phase rank, and, on the right, its complement.
\begin{figure}[h]
  \centering
  \centerline{\includegraphics[width=0.4\textwidth]{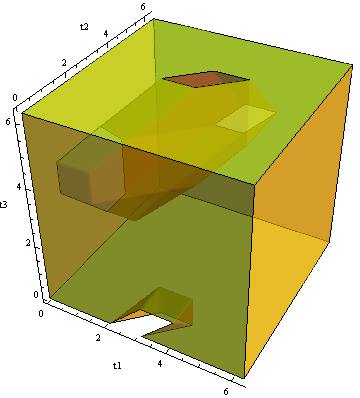} \hspace{2cm} \includegraphics[width=0.4\textwidth]{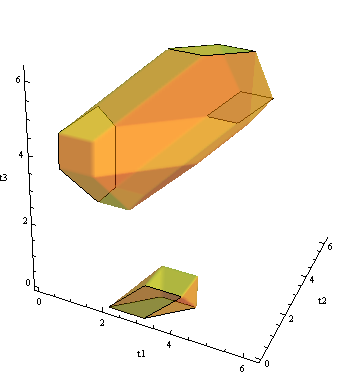}}
    \caption{Slice of the $3\times3$ determinant coamoeba and its complement.}
	\label{fig:colop_nonmax}
\end{figure}
\end{example}

Since checking if the phase rank of a $3 \times 3$ matrix is less or equal than two is so easy, it could be tempting to think that we might be able to leverage the result into some answer to Question \ref{q:rank2}, that asks for an algorithm to check if an $n \times m$ matrix has phase rank at most $2$. The problem is that, contrary to what happened in the phaseless rank case, not even the $3 \times m$ case can be easily derived from the rank of its $3 \times 3$ submatrices. In fact, the matrix
$$\begin{bmatrix}
1 & 1 & 1 & 1\\
1 & 1 & -1 & -1\\
1 & -1 & 1 & -1
\end{bmatrix}$$
has phase rank $3$, although all its $3 \times 3$ submatrices have sign rank (and thus phase rank) at most $2$.

As mentioned before, for $4 \times 4$ matrices we have seen that colopsidedness is not necessary for having phase rank $4$. In fact, even generating colopsided examples tends to be hard, but there are some systematic ways of doing it.

\begin{example}\label{ex:colop1}
The phase matrix $$\Theta=\begin{bmatrix}
1 & 1 & 1 & 1\\
1 & -1 & e^{i\frac{\pi}{4}} & e^{i\frac{\pi}{4}}\\
1 & e^{i\frac{\pi}{4}} & -1 & e^{i\frac{\pi}{4}}\\
1 & e^{i\frac{\pi}{4}} & e^{i\frac{\pi}{4}} & -1\\
\end{bmatrix}$$ is colopsided.
$$\text{Conv}(\overrightarrow{\det(\Theta)})=\text{Conv}(\{e^{i\frac{\pi}{2}},e^{i\frac{3\pi}{4}},-1,e^{i\frac{5\pi}{4}},e^{i\frac{3\pi}{2}}\})$$
contains $0$, but its (relative) interior does not (see Figure \ref{fig:colop1}).
\end{example}

\begin{figure}[H]
  \centering
    \centerline{\includegraphics[width=0.4\textwidth]{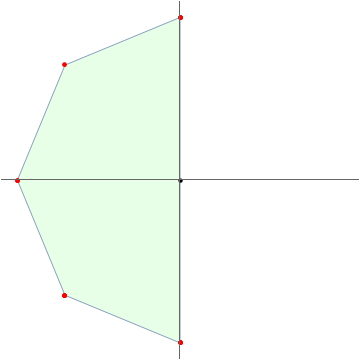}}
    \caption{Convex hull of $\protect\overrightarrow{\det(\Theta)}$, where $\Theta$ is the matrix from Example \ref{ex:colop1}.}
%     \caption{$\text{Conv}(\overrightarrow{\det(\Theta)})$, with
% 			$\Theta=\bigg[\protect\begin{smallmatrix}
% 			1 & 1 & 1\\
% 			1 & i & -i\\
% 			1 & 1 & i \protect\end{smallmatrix}\bigg].$}
	\label{fig:colop1}
\end{figure}

The underlying idea behind the generation of Example \ref{ex:colop1} was the following: we searched for a $3\times4$ phase matrix for which all $3\times 3$ submatrices are colopsided, namely $$\Theta=\begin{bmatrix}
1 & 1 & 1 & 1\\
1 & -1 & e^{i\frac{\pi}{4}} & e^{i\frac{\pi}{4}}\\
1 & e^{i\frac{\pi}{4}} & -1 & e^{i\frac{\pi}{4}}
\end{bmatrix}.$$
This means that for every $3 \times 3$ submatrix $\Theta_i$,
$\overrightarrow{\det(\Theta_i)}$ is colopsided, i.e., every point is contained in some
 common half-space $H_i$. Then, having in mind the Laplace expansion formula for $4\times 4$
 determinant along its last row, we can see that $\overrightarrow{\det(\Theta)}$ is just
 the collection of the sets $\overrightarrow{\det(\Theta_i)}$ each rotated by its
 complementary entry in the $4 \times 4$ matrix. By picking suitable entries we can
 rotate them in such a way that all the $H_i$ will coincide and ensure the whole matrix
 is colopsided. This gives us a tool to construct colopsided $4 \times 4$ matrices from
 $3 \times 4$ matrices where all $3\times 3$ submatrices have phase rank $3$. The problem is that, on the one hand,
 colopsidedness is not enough to fully characterize $4 \times 4$ phase nonsingularity, and, on the other hand, finding $3 \times 4$ matrices with this property is also hard. Thus, the problem of finding such a characterization remains completely open.

\begin{question}
Find an \emph{effective} description of the set of $4 \times 4$ phase matrices with phase rank less than $4$.
\end{question}

\section{Bounds for phase rank}\label{sec:bounds_phase}

\subsection{Lower bounds}

The lower bounds presented in Theorems \ref{thm:lbound_sign} and \ref{thm:lbound_sign2} for the signless rank can be directly transposed to the complex case. One only need to note that the original proofs of those results still go through, with very minor modifications to adapt them to the complex case. 

\begin{theorem}\label{thm:lwbd_phase}
If $\Theta$ is an $n\times m$ phase matrix, then $\rank_{\text{phase}}(\Theta) \geq \frac{\sqrt{nm}}{||\Theta||},$ where $||\Theta||$ is the spectral norm of $\Theta$,
\end{theorem}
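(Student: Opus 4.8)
The plan is to transpose Forster's proof of Theorem \ref{thm:lbound_sign} essentially verbatim, replacing transposes by conjugate transposes in the normalization step and keeping careful track of where the phase information lives. Suppose $\rank_{\text{phase}}(\Theta)=r$ is witnessed by a rank-$r$ matrix $M$ with $M_{ij}/|M_{ij}|=\Theta_{ij}$, and factor $M=XY$ with $X\in\CC^{m\times r}$ and $Y\in\CC^{r\times n}$. Writing $x_i\in\CC^r$ for the rows of $X$ and $y_j\in\CC^r$ for the columns of $Y$, we have $M_{ij}=x_i^Ty_j$ (ordinary bilinear product, no conjugation), so $\overline{\Theta_{ij}}\,x_i^Ty_j=|M_{ij}|\geq 0$. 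The whole argument will consist of sandwiching the single quantity $\sum_{i,j}\left|\hat x_i^T\hat y_j\right|$ between $mn/r$ and $\|\Theta\|\sqrt{mn}$ after a suitable normalization.

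First I would put the columns $y_j$ into radial-isotropic position. The one nontrivial input is the complex analogue of Forster's normalization lemma: if the $y_j$ are in general position in $\CC^r$, there is $T\in GL_r(\CC)$ such that $\tilde y_j:=Ty_j$ satisfy $\sum_j \tilde y_j\tilde y_j^*/\|\tilde y_j\|^2=\tfrac{n}{r}I_r$, where $*$ is the conjugate transpose. General position can be arranged by an arbitrarily small perturbation of $Y$ which changes no phase of $M$ (having a prescribed phase is an open condition), the degenerate configurations following by a limiting argument. Replacing each $x_i$ by $(T^{-1})^Tx_i$ preserves every product $x_i^Ty_j=M_{ij}$, hence every phase; normalizing all vectors to unit Hermitian length then produces unit vectors $\hat x_i,\hat y_j$ with $\hat x_i^T\hat y_j$ still of phase $\Theta_{ij}$ and with $\sum_j \hat y_j\hat y_j^*=\tfrac{n}{r}I_r$.

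For the lower bound, $\overline{\Theta_{ij}}\,\hat x_i^T\hat y_j=|\hat x_i^T\hat y_j|$, and Cauchy--Schwarz applied to the moduli of the coordinates gives $|\hat x_i^T\hat y_j|\leq\|\hat x_i\|\,\|\hat y_j\|=1$; hence for each fixed $i$,
$$\sum_j \left|\hat x_i^T\hat y_j\right|\;\geq\;\sum_j \left|\hat x_i^T\hat y_j\right|^2\;=\;\hat x_i^T\Big(\sum_j \hat y_j\hat y_j^*\Big)\overline{\hat x_i}\;=\;\tfrac{n}{r}\|\hat x_i\|^2\;=\;\tfrac{n}{r},$$
so the full sum is at least $mn/r$. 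For the upper bound, set $v_i:=\sum_j \overline{\Theta_{ij}}\,\hat y_j$ and estimate, using $\mathrm{Re}(z)\le|z|$ and Cauchy--Schwarz twice,
$$\sum_{i,j}\left|\hat x_i^T\hat y_j\right|=\mathrm{Re}\sum_i \hat x_i^Tv_i\;\leq\;\sum_i\|v_i\|\;\leq\;\sqrt{m}\Big(\sum_i\|v_i\|^2\Big)^{1/2}.$$
Since $\sum_i \overline{\theta_i}\,\overline{\theta_i}^{*}=\Theta^*\Theta$ for $\theta_i$ the $i$-th row of $\Theta$, one gets $\sum_i\|v_i\|^2=\Tr\!\big(\hat Y(\Theta^*\Theta)\hat Y^*\big)\leq\|\Theta\|^2\,\Tr(\hat Y\hat Y^*)=\|\Theta\|^2 n$, whence $\sum_{i,j}|\hat x_i^T\hat y_j|\leq\|\Theta\|\sqrt{mn}$. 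Comparing the two bounds yields $mn/r\leq\|\Theta\|\sqrt{mn}$, i.e. $r\geq\sqrt{mn}/\|\Theta\|$.

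The only genuinely nonformal ingredient is the complex radial-isotropic normalization lemma; I expect this to be the main obstacle, in the sense that one must verify Forster's compactness/fixed-point argument for the normalizing $T$ transfers to $GL_r(\CC)$ (it does, being an instance of the complex geometric Brascamp--Lieb / Barthe framework) and that the bilinear products $x_i^Ty_j$ carrying the phases interact correctly with the Hermitian isotropy used in the normalization. Everything after that is the bookkeeping above, and the perturbation argument handling non-general-position $Y$ is routine but must be phrased so as to preserve all phases of $M$.
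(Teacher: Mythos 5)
Your proof is correct and takes the same route as the paper, which gives no details beyond asserting that Forster's original argument carries over to the complex case with minor modifications; your write-up supplies exactly those modifications (the Hermitian radial-isotropic normalization in $GL_r(\CC)$, the bilinear pairing $x_i^Ty_j$ carrying the phases, and the two Cauchy--Schwarz estimates). One small repair: having a prescribed phase is \emph{not} an open condition (the set of nonzero complex numbers with a fixed phase is a ray, which has empty interior), so perturbing $Y$ into general position \emph{does} change the phases of $M$; this is harmless only via the limiting argument you already mention --- the perturbed product still has rank at most $r$, its phase matrix $\Theta^{(\varepsilon)}$ converges to $\Theta$, and $\sqrt{mn}/\|\Theta^{(\varepsilon)}\|\to\sqrt{mn}/\|\Theta\|$ by continuity of the spectral norm --- so that step should be phrased as a limit rather than as a phase-preserving perturbation.
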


Analogously to Theorem \ref{thm:lbound_sign}, we can replace $||\Theta||$ with  $$||\Theta||^*=\min\left\{||M||:\frac{M_{ij}}{|M_{ij}|}= \Theta_{ij} \text{ and } |M_{ij}|\geq 1 \, \forall i,j\right\}$$ in the above bound.

Theorem \ref{thm:lbound_sign2} can also be generalized to the complex case. This bound involves the dual norm of the $\gamma_2$ norm, also known as the max-norm. Recall that for a matrix $A \in \mathbb{C}^{n\times m}$ we have
$$\gamma_2(A)= \min\bigg\{\max_{i,j}||x_i||_{l_2} ||y_j||_{l_2}:XY^T=A\bigg\},$$
where $\{x_i\}_{i=1}^n$ and $\{y_j\}_{j=1}^m$ are the rows of $X$ and $Y$, respectively. Therefore, we have
$$\gamma^*_2(A)=\max_{B:\gamma_2(B)\leq 1} |\langle A,B \rangle|= \max_{B:\gamma_2(B)\leq 1} |\Tr(A^*B)|.$$

\begin{theorem} \label{thm:complexgamma2bound}
For any $n\times m$ phase matrix $\Theta$, $$\rank_{\text{phase}}(\Theta)\geq \frac{nm}{\gamma^*_2(\Theta)}.$$
\end{theorem}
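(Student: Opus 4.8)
The plan is to mimic the proof of the real bound in Theorem \ref{thm:lbound_sign2} (from \cite{linial2007complexity}), checking at each step that the argument is insensitive to whether the entries are signs or arbitrary phases. The only structural fact we used about sign matrices in the real case is that for any realizing matrix $M$ with $\text{sign}(M)=S$ we may rescale so that $|M_{ij}|\geq 1$, and hence $\langle M, S\rangle = \sum_{ij} M_{ij}S_{ij} \geq nm$ since each summand $M_{ij}S_{ij}=|M_{ij}|\geq 1$. First I would establish the exact complex analogue: if $M$ realizes $\Theta$, i.e. $M_{ij}/|M_{ij}| = \Theta_{ij}$, then $\overline{\Theta_{ij}} M_{ij} = |M_{ij}|$ is a nonnegative real, so $\langle \Theta, M\rangle = \Tr(\Theta^* M) = \sum_{ij} \overline{\Theta_{ij}}M_{ij} = \sum_{ij}|M_{ij}|$. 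After scaling the modulus of $M$ so that $|M_{ij}|\geq 1$ for all $i,j$ (which does not change the phases), this real quantity is at least $nm$.

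Next I would run the standard $\gamma_2$/$\gamma_2^*$ factorization argument. Write $M=XY^T$ realizing $\gamma_2(M)$, and recall $\gamma_2^*$ is the dual norm, so for any $B$ one has $|\langle M,B\rangle|\leq \gamma_2(B)\,\gamma_2^*(M)$; dually, taking $B=\Theta/\gamma_2^*(\Theta)$ (normalized so $\gamma_2(B)\le 1$ in the dual pairing) gives the inequality relating $\langle \Theta,M\rangle$ to $\gamma_2(M)\gamma_2^*(\Theta)$. The key quantitative link is the standard bound $\gamma_2(M)\leq \sqrt{\rank(M)}\,\max_{ij}|M_{ij}|$, valid over $\CC$ exactly as over $\RR$: a rank-$r$ matrix factors as $XY^T$ with inner dimension $r$, and one controls $\max_i\|x_i\|_{l_2}\max_j\|y_j\|_{l_2}$ in terms of $\sqrt r$ times the largest entry by an appropriate normalization. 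Chaining these, with the entries of $M$ also scaled so $|M_{ij}|\leq 1$ is the more delicate bookkeeping, one isolates $\rank(M)$ on one side and $nm/\gamma_2^*(\Theta)$ on the other. Since $M$ was an arbitrary realizer, minimizing over $M$ yields $\rank_{\text{phase}}(\Theta)\geq nm/\gamma_2^*(\Theta)$.

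The step I expect to be the main obstacle is reconciling the two normalizations: the lower-bound step wants $|M_{ij}|\geq 1$ so that $\langle\Theta,M\rangle\geq nm$, while the $\gamma_2$ step wants control on $\max_{ij}|M_{ij}|$ from above. In the pure sign case these are compatible because one works with the $\{-1,1\}$-valued matrix $B=S$ directly and the realizer $M$ only enters through the inner product and its rank. I would therefore follow \cite{linial2007complexity} in keeping $\Theta$ (not $M$) as the fixed matrix in the dual pairing, so that $\gamma_2^*(\Theta)$ appears and the only property of $M$ used is $\langle\Theta,M\rangle=\sum|M_{ij}|\geq nm$ together with $\gamma_2(M)\leq\sqrt{\rank(M)}\cdot 1$ after scaling the maximum modulus to $1$; the genuinely complex content is then just verifying that the Hilbert-space factorization underlying $\gamma_2$ and its duality behave identically over $\CC$, with $\langle A,B\rangle=\Tr(A^*B)$ replacing the real trace form. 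This is routine but must be checked, since the conjugation in $\Tr(A^*B)$ is exactly what makes $\sum\overline{\Theta_{ij}}M_{ij}$ come out real and positive.
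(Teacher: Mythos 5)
The paper gives no written proof of this theorem beyond the remark that the argument of \cite{linial2007complexity} ``goes through with very minor modifications'' over $\CC$; so the real question is whether your reconstruction of that argument is the right one, and it is not. The obstacle you flag at the end --- reconciling the normalization $|M_{ij}|\geq 1$ (needed for $\langle \Theta,M\rangle=\sum_{ij}|M_{ij}|\geq nm$) with the normalization $\max_{ij}|M_{ij}|\leq 1$ (needed for $\gamma_2(M)\leq\sqrt{\rank(M)}\,\max_{ij}|M_{ij}|$) --- is not ``delicate bookkeeping'' but a fatal incompatibility: the two conditions together force $|M_{ij}|=1$, i.e.\ $M=\Theta$ up to nothing, and a minimal-rank realizer need not have constant modulus. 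Worse, even if both normalizations held, your chain $nm\leq\langle\Theta,M\rangle\leq\gamma_2^*(\Theta)\,\gamma_2(M)\leq\gamma_2^*(\Theta)\sqrt{\rank(M)}$ would yield $\rank_{\text{phase}}(\Theta)\geq\bigl(nm/\gamma_2^*(\Theta)\bigr)^2$, which is false in general: for an $n\times n$ Hadamard matrix $H$ one has $\gamma_2^*(H)=n^{3/2}$, so the squared bound would give sign rank at least $n$, contradicting the $\frac{n}{2}(1+o(1))$ upper bound quoted in Section 2.1. The same incompatibility is present in the real case, so your claim that it disappears for sign matrices is also incorrect. (A smaller slip: dividing $\Theta$ by $\gamma_2^*(\Theta)$ does not produce a matrix with $\gamma_2\leq 1$; the dual-norm inequality is applied to the realizer, not to $\Theta$.)

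The actual proof of Theorem \ref{thm:lbound_sign2}, which is what transfers to $\CC$, is Forster's isotropic-position argument rather than a chaining of entrywise bounds. Take a realizer $M$ of rank $r$, factor $M=XY^*$ with rows $x_i\in\CC^r$, $y_j\in\CC^r$, rescale rows and columns by positive reals (which preserves phases) so that all $x_i,y_j$ are unit vectors, and apply an invertible $T$ to the $x_i$ and $T^{-*}$ to the $y_j$ followed by renormalization so that $\sum_i x_ix_i^{*}=\frac{n}{r}I$; phases are again preserved. Setting $B_{ij}=\langle x_i,y_j\rangle$ one gets $\gamma_2(B)\leq 1$ and
$$\sum_{ij}|B_{ij}|\;\geq\;\sum_{ij}|B_{ij}|^2\;=\;\sum_j y_j^{*}\Bigl(\sum_i x_ix_i^{*}\Bigr)y_j\;=\;\frac{nm}{r},$$
where the first inequality uses $|B_{ij}|\leq 1$ (not $\geq 1$). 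Since $B$ has the phases of $\Theta$, $|\Tr(\Theta^{*}B)|=\sum_{ij}|B_{ij}|$, whence $\gamma_2^*(\Theta)\geq nm/r$. The genuinely complex content is only the replacement of transposes by conjugate transposes and the verification that the isotropic-position step works over $\CC^r$; your final paragraph correctly identifies where conjugation matters, but the core mechanism of the proof is the isotropic-position identity, which your outline omits entirely.
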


Note that these bounds, being continuous, can only be very rough approximations to the discontinuous notion of phase rank, but can in some cases provide meaningful results and have been used in the real case to prove several results in communication complexity.

\begin{example}

Searching randomly for small examples one immediately observes the limitations of these bounds, as it is very hard to obtain effective bounds.

For instance, the bounds from Theorems \ref{thm:lwbd_phase} and \ref{thm:complexgamma2bound} for the matrix
$$\begin{bmatrix}
1 & 1 & 1 & i  \\
1 & 1 &-1 & 1  \\
1 & -1 & 1 & 1 \\
-1 & 1 & 1 & 1 \\
\end{bmatrix},$$
which has phase rank $4$, are $1.5751$  and $1.6996$, respectively. The smallest example that we could find for which one of the bounds is nontrivial has size $8 \times 8$ and is shown below. For
$$\begin{bmatrix}
    -1 &    i  &  -1  &   i  &   i &   -1  &   1  &   1 \\
    -1 &    i  &   i  &   i  &  -i &   -i &   -i  &   i\\
     1 &    i  &   1  &  -1  &   i &    i &   -i  &  -1 \\
     1 &    i  &   1  &   1  &   i &    1 &    1  &   i \\
     1 &    i   & -1  &   i  &   i &   -i &   -1  &   1 \\
    -i &    -i &   1  &  -i  &   i &   -1 &   -i  &  -1 \\
    -1 &    i  &  -1  &  -i  &   i &    i &   -1  &  -i \\
     i &   -1  &   i  &  -i  &   1 &   -1 &   -i  &  -1
\end{bmatrix}$$
the bounds values are $1.8383$ and $2.0335$, so Theorem \ref{thm:complexgamma2bound} guarantees that it has phase rank at least $3$. Note that one could maybe conclude the same from checking that some of the $3 \times 3$ submatrices are colopsided, but to attain lower bounds larger than $3$ these inequalities are our only systematic tool.
\end{example}

\subsection{Upper bounds}

One obvious upper bound for the phase rank is the usual rank, but it has a very limited scope. In terms of upper bounds for phase rank that depend only on the size of the matrix, one could aim for a generalization of Theorem \ref{thm:ubound_sign}. The issue lies in the fact that the proof uses the combinatorial nature of a sign matrix, such as the number of sign changes on each row of the matrix, and it is not obvious how this concept applies to a phase matrix. We can however use Proposition \ref{prop:nonsingularphaserec} to create a very simple upper bound.

\begin{proposition} \label{prop:ub_phase}
Let $\Theta$ be an $n \times m$ phase matrix, with $n \leq m$, and $k$ the smallest positive integer such that
$m < \frac{2^{k-1}}{k}$. Then $\rank_{\text{phase}}(\Theta) \leq n-\lfloor \frac{n-1}{k-1} \rfloor$.
\end{proposition}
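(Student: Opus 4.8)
The plan is to prove the bound by exhibiting a positive modulus matrix $M$ for which $M*\Theta$ has rank at most $n-t$, where $t=\lfloor \frac{n-1}{k-1}\rfloor$, built by gluing together rank-deficient realizations of small blocks supplied by Proposition \ref{prop:nonsingularphaserec}. If $n<k$ then $t=0$ and there is nothing to prove, so I assume $n\geq k$; note also that $k\geq 3$ for every $m\geq 1$, and that $k\leq m$ since $m\geq n\geq k$. First I would lay out $t$ overlapping blocks of $k$ consecutive rows each, the $j$-th block $B_j$ consisting of rows $s_{j-1},s_{j-1}+1,\dots,s_j$ where $s_j=j(k-1)+1$, so that consecutive blocks share exactly one \emph{junction} row and the last row used is $s_t=t(k-1)+1\leq n$ (this is precisely the content of the floor). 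Each block is a $k\times m$ phase matrix with $k\leq m<\tfrac{2^{k-1}}{k}$, so Proposition \ref{prop:nonsingularphaserec} yields a positive matrix $M^{(j)}$ with $\rank(N^{(j)})\leq k-1$ for $N^{(j)}=M^{(j)}*\Theta_{B_j}$; in particular the $k$ rows of $N^{(j)}$ carry a nonzero linear dependency.

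The next step is to glue these local realizations into a single globally consistent matrix. The only obstruction is that a junction row $s_j$ is assigned a modulus both by block $j$ (as its last row) and by block $j+1$ (as its first row), and these need not agree. Since positive column scaling preserves both the rank and all the phases, I would replace each $N^{(j)}$ by $N^{(j)}D^{(j)}$ for a suitable positive diagonal matrix $D^{(j)}$, chosen recursively from left to right: with $D^{(1)}=I$, pick $D^{(j+1)}$ so that the entrywise modulus of the first row of $N^{(j+1)}D^{(j+1)}$ matches that of the last row of $N^{(j)}D^{(j)}$ in every column, which is possible because the required ratios are all positive. Phases agree automatically, so after this reconciliation the junction rows coincide as complex vectors and the global matrix $M*\Theta$ is well defined: its rows inside block $j$ are taken from $N^{(j)}D^{(j)}$, and rows beyond $s_t$ (if any) are assigned modulus $1$. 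Crucially, column scaling leaves each block's internal row dependency intact, so $M*\Theta$ still satisfies one genuine dependency per block.

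Finally I would bound the rank. The interior rows $s_{j-1}+1,\dots,s_j-1$ of block $j$ belong to no other block (here $k\geq 3$ guarantees at least one such row), so if block $j$'s dependency has a nonzero coefficient on one interior row, that row can be expressed as a combination of the remaining rows of its block, all of which are retained. Eliminating one interior row per block removes $t$ distinct rows, each lying in the span of the retained $n-t$ rows, which gives $\rank(M*\Theta)\leq n-t$ and hence the claimed inequality.

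The hard part will be guaranteeing that each block's dependency really is supported on an interior row, rather than being concentrated on the two junction rows (which happens exactly when those two scaled rows are proportional). I would resolve this by genericity: the volume estimate behind Proposition \ref{prop:nonsingularphaserec} gives the strict inequality $m\,2k\pi^{k}<(2\pi)^{k}$, hence an open set of non-colopsided scalings and thus an open family of rank-$(k-1)$ realizations $N^{(j)}$; for a generic member the left null space is one-dimensional with a fully supported null vector, so every interior coordinate is nonzero. Making this genericity argument rigorous inside the phase-constrained determinantal locus is the single step that requires real care, while everything else is bookkeeping.
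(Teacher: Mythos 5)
Your overall strategy --- tile the rows with $k\times m$ blocks, realize each block with rank at most $k-1$ via Proposition \ref{prop:nonsingularphaserec}, glue the realizations along shared rows by positive column scalings, and then discard one redundant row per block --- is the same as the paper's. The difference is the shape of the tiling, and that difference is where your argument breaks. You chain the blocks so that consecutive blocks overlap in a junction row, and your row-elimination step then needs each block's linear dependency to be supported on an \emph{interior} row. You correctly flag that this can fail (the dependency could live entirely on the two junction rows) and propose to repair it by genericity, but that repair is not a proof: Proposition \ref{prop:nonsingularphaserec} hands you \emph{some} rank-deficient realization, not a generic point of the phase-constrained rank-$(k-1)$ locus, and the claim that a generic such realization has a one-dimensional, fully supported left kernel is asserted rather than established. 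As written, the proposal therefore has a genuine gap at its final and decisive step.

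The gap is avoidable, and the paper's proof shows the cleanest way: choose the $t=\lfloor \frac{n-1}{k-1}\rfloor$ blocks to all contain the \emph{first} row and to be otherwise pairwise disjoint (a star rather than a chain), normalizing each realization so that its first row has unit moduli, which makes the blocks agree on the one row they share. Then each block owns $k-1$ private rows, and since a nonzero dependency among a block's rows cannot be supported on the single (nonzero) shared row alone, it must have a nonzero coefficient on a private row; that private row lies in the span of rows that are all retained, so one row per block can be deleted without changing the row space. If you want to keep your chain, you can also close the gap without genericity: peel blocks off from the right by induction (the last block meets the remaining rows in only one junction row, so its dependency must involve some other row of that block, which can then be deleted), or unpack the proof of Lemma \ref{lem:lopphase} to see that the realization can be taken with a left null vector in $(S^1)^k$, hence fully supported, a property preserved by your column scalings. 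Any of these makes the argument rigorous; the appeal to genericity does not.
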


\begin{proof}
Let $k$ satisfy $m < \frac{2^{k-1}}{k}$. Then, by  Proposition \ref{prop:nonsingularphaserec}, every $k\times m$ submatrix of $\Theta$ has phase rank at most $k-1$. Hence, for every $k\times m$ submatrix $\Gamma$, we can find $B_{\Gamma}$ with the same phases of $\Gamma$ and rank less than $k$. Moreover, we are free to pick the first row of $B_{\Gamma}$ to have modulus one, since scaling an entire column of $B_{\Gamma}$ by a positive integer does not change the rank or the phases.

Consider then $k \times m$ submatrices $\Gamma_i$ of $\Theta$, $i=1,\dots,\left\lfloor \frac{n-1}{k-1} \right\rfloor$, all containing the first row but otherwise pairwise disjoint. We can then construct a matrix $B$ by piecing together the $B_{\Gamma_i}$'s, since they coincide in the only row they share, and filling out the remaining rows, always less than $k-1$, with the corresponding entries of $\Theta$.

By construction, in that matrix $B$ we always have in the rows corresponding to $B_{\Gamma_i}$  a row different than the first that is a linear combination of the others, and can be erased without dropping the rank of $B$. Doing this for all $i$, we get that the rank of $B$ has at least a deficiency per $B_{\Gamma_i}$, so its rank is at most
$$n- \left\lfloor \frac{n-1}{k-1} \right\rfloor,$$
and since $B$ has the same phases as $\Theta$, $\rank_{\text{phase}}(\Theta)$ satisfies the intended inequality.
\end{proof}

It seems that the bound derived above is not very strong. Indeed, for a $100 \times 100$ matrix we can only guarantee that the phase rank is at most $91$, and for a $1000 \times 1000$ matrix that it is at most $929$. We can quantify a little how good this bound is. By inverting the function $\frac{2^{x-1}}{x}$ we get that $k^*$, the smallest $k$ that satisfies the inequality on Proposition \ref{prop:ub_phase} is given by
$$\left\lceil{-\frac{W_{-1}\left( - \frac{\ln 2}{2m} \right)}{\ln 2}} \right\rceil ,$$
where $W_{-1}$ is one of the branches of the Lambert $W$ function, the inverse of $f(x)=xe^x$. Using the bounds for $W_{-1}$ derived in \cite{chatzigeorgiou2013bounds}, one can conclude that $k^*$ satisfies the inequalities
$$\left\lceil \frac{1+\sqrt{2u}+u}{\ln{2}} \right\rceil \geq k^* \geq  \left\lceil \frac{1+\sqrt{2u}+\frac{2}{3}u}{\ln{2}} \right\rceil $$
where $u = \ln(m) + \ln(2)-\ln(\ln(2))-1 \approx \ln(m) - 0.05966$. This means that the best bound we can expect from  Proposition \ref{prop:ub_phase} for an $n \times n$ matrix grows as $n\left(1-\frac{c}{\log_2(n)}\right)$. Recall that for sign rank we have that the rank of an $n \times n$ matrix is at most $\frac{n}{2}(1+o(1))$ , so our bound is very far from a result of that type, that one would expect to be true. It is, however, a first step towards a more meaningful bound.

\section*{Conclusion}

In this paper we proposed the new notion of phase rank of a phase matrix. This is, in our view, a very natural object to study, that connects organically to a number of different problems that have been studied in the past. One of the main purposes of this text is to establish this new concept and survey these connections to areas like the sign rank, coamoeba theory or ray-nonsingularity. We set up the framework, providing the basic properties of the phase rank and surveying what previous works can tell us about it. Furthermore we make some contributions with new results, including the characterization of $3 \times 3$ matrices with phase rank $2$ in terms of colopsidedness and an upper bound of the phase rank.

\bibliographystyle{plain}
\bibliography{bibliografia}

\end{document}